\documentclass[10pt]{amsart}
\usepackage[utf8]{inputenc}

\usepackage{amssymb,amsthm,amsmath}
\usepackage{mathrsfs}
\usepackage{enumerate}
\usepackage{graphicx,xcolor}
\usepackage{setspace}
\usepackage[hidelinks]{hyperref}
\usepackage{comment}
\usepackage{bbm}

\usepackage{pgfplots}
\usepgfplotslibrary{fillbetween}
\usetikzlibrary{calc}
\pgfplotsset{compat=1.18}

\usepackage[paper=a4paper, left=1.2in, right=1.2in, top=1.2in, bottom=1.2in]{geometry}
\usepackage{dsfont}

\numberwithin{equation}{section} 

\newtheorem{theorem}{Theorem}[section] 
\newtheorem{lemma}[theorem]{Lemma}
\newtheorem{corollary}[theorem]{Corollary}
\newtheorem{proposition}[theorem]{Proposition}

\theoremstyle{remark}
\newtheorem{remark}[theorem]{Remark}

\theoremstyle{definition}

\hypersetup{
    colorlinks,
    linkcolor={teal!94!black},
    citecolor={orange!90!black},
    urlcolor={blue!75!black}
}

\title{Topological Genericity and Large Linear Structures in Function Spaces}

\author{Christos Pandis}

\address{Department of Mathematics \& Applied Mathematics, University of Crete, Voutes Campus, 70013 Heraklion, Greece}

\email{chrpandis@gmail.com}
\subjclass[2020]{30H10, 46E10, 46E15}

\date{\today}

\begin{document}
\maketitle
\begin{abstract}
We prove that the set of integrable functions on the unit circle for which the analogue of Paley's theorem for \(H^1\) fails  is residual in \(L^1(\mathbb T)\). Moreover, we establish algebraic genericity and spaceability results in several Hardy-type function spaces under prescribed conditions on Taylor coefficients, extending phenomena considered in \cite{pandis2024some,NestoridisGenericH1}.
\end{abstract}

\vspace{0.5em}
\noindent\textbf{Keywords.}
Hardy space $H^p$,  Disc Algebra, Topological genericity, Algebraic-genericity, Spaceability, lineability, generic property.\smallskip\\

\maketitle

\section{Introduction and Results}

A common phenomenon in analysis is that, once the existence of an object with a
certain pathological property has been established, one may ask whether such
objects are abundant. Abundance can be understood in several different senses:
cardinality, measure, Baire category, or the existence of large linear
structures. A simple guiding example is given by the irrational numbers. In
comparison with the rational numbers, the irrational numbers may be regarded as
the ``generic'' real numbers. Indeed, from the cardinal point of view, the
rationals are countable, whereas the irrationals are uncountable. From the
measure-theoretic point of view, the rationals have Lebesgue measure zero,
whereas the irrationals have full measure. From the topological point of view,
the rationals are meagre in \(\mathbb R\), since they are a countable union of
closed sets with empty interior, while the irrationals form a dense
\(G_\delta\) subset of \(\mathbb R\).

Recall that a subset of a topological space is called \(G_\delta\) if it is a
countable intersection of open sets. A subset is called residual if its
complement is of first category. Equivalently, in a Baire space, a residual set
contains a dense \(G_\delta\) set; see~\cite{oxtoby1980measure}. Such sets are
considered topologically large; thus, a property which holds on a residual set is
often said to hold generically.

Over the last decades, there has been a growing interest in finding large
algebraic and topological structures inside sets which are themselves highly
nonlinear; see, for instance,
\cite{axarlis2023topological,biehler2020algebraic,konidas2026double,nestoridis2022generic,bernal2023bloch,galanos2021genericity,bernal2014linear}
and the references therein.

Let \(E\) be a topological vector space and let \(M\subset E\).
\begin{itemize}
    \item We say that \(M\) is \emph{lineable} if \(M\cup\{0\}\) contains an
    infinite-dimensional vector subspace of \(E\).

    \item We say that \(M\) is \emph{dense-lineable}, or
    \emph{algebraically generic}, if \(M\cup\{0\}\) contains a dense vector
    subspace of \(E\).

    \item Finally, \(M\) is said to be \emph{spaceable} if \(M\cup\{0\}\)
    contains a closed infinite-dimensional vector subspace of \(E\).
\end{itemize}
Given the extensive literature on related topics, we refer the interested reader
to the comprehensive book~\cite{aron2015lineability} for a detailed historical
account and a survey of the current state of the art.

The purpose of this paper is to study these notions for several exceptional sets
defined through coefficient conditions. Our starting point is a collection of
genericity results in~\cite{pandis2024some} and~\cite{NestoridisGenericH1}
concerning Taylor coefficients in Hardy-type spaces. In particular,
\cite{pandis2024some} proves that, topologically generically in
\(\bigcap_{0<p<1}H^p\), the Taylor coefficient sequence is unbounded, and that
analogous statements hold for the Taylor coefficients of derivatives, for
localized Hardy spaces, and for the disc algebra \(A(\mathbb D)\), where the
relevant exceptional condition is the failure of absolute summability of the
Taylor coefficients. We also consider a topological genericity result of
Nestoridis~\cite{NestoridisGenericH1} concerning \(H^1\): if \(f\in H^1\) and
\(F(f)\) denotes its primitive, then Hardy's inequality implies that the Taylor
coefficients of \(F(f)\) belong to \(\ell^1\), whereas Nestoridis showed that,
generically, these coefficients do not belong to any \(\ell^p\) with
\(0<p<1\).

Our aim is to strengthen these topological genericity results in the direction of
linear structure. We prove algebraic genericity and spaceability for the
corresponding coefficient-exceptional sets. In addition, we prove a residuality
result in \(L^1(\mathbb T)\) showing that the analogue of Paley's theorem for
\(H^1\) fails generically.

\subsection{A generic result for lacunary Fourier coefficients in \(L^1(\mathbb T)\)}
A well-known theorem of Paley states that if
\[
f(z)=\sum_{n=0}^{\infty} a_n z^n \in H^1,
\]
then, for every lacunary sequence \((n_k)_{k\geq 1}\) of positive integers,
that is, for every sequence satisfying
\[
\frac{n_{k+1}}{n_k}\geq Q>1,
\qquad k\geq 1,
\]
one has
\[
\sum_{k=1}^{\infty} |a_{n_k}|^2 < +\infty.
\]
This conclusion need not hold in \(L^1\), even though \(H^1\) is, in many
contexts, a natural substitute for \(L^1\).Moreover, the set of functions for
which this failure occurs is large. We define
\[
\mathcal Z
:=
\left\{
f\in L^1(\mathbb T):
\text{there exists a lacunary sequence } (n_k)_{k\geq 1}
\text{ such that }
(\widehat f(n_k))_{k\geq 1}\notin \ell^2
\right\}.
\]
\begin{theorem}\label{Z residual}
The set \(\mathcal Z\) is residual in \(L^1(\mathbb T)\).
\end{theorem}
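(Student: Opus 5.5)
The plan is to fix a single lacunary sequence, namely \(n_k=2^k\), and show that already the smaller set
\[
\mathcal Z_0:=\Bigl\{f\in L^1(\mathbb T): \sum_{k=1}^\infty |\widehat f(2^k)|^2=+\infty\Bigr\}\subset\mathcal Z
\]
is residual. Since a superset of a residual set is residual, this suffices. The complement of \(\mathcal Z_0\) is \(\bigcup_{M\in\mathbb N} A_M\), where
\[
A_M:=\Bigl\{f\in L^1(\mathbb T): \sum_{k=1}^\infty |\widehat f(2^k)|^2\le M\Bigr\}.
\]
So it is enough to prove that each \(A_M\) is closed with empty interior.

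\emph{Closedness.} Each functional \(f\mapsto \widehat f(n)\) is continuous on \(L^1(\mathbb T)\), because \(|\widehat f(n)|\le \|f\|_1\). Hence for each \(K\) the set
\[
\Bigl\{f:\ \sum_{k=1}^K|\widehat f(2^k)|^2\le M\Bigr\}
\]
is closed. The set \(A_M\) is the intersection of these sets over all \(K\), so it is closed.

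\emph{Empty interior.} Let \(f\in A_M\) and \(\varepsilon>0\). Perturb \(f\) by \(\varepsilon K_N\), where \(K_N\) is the Fejér kernel. Recall that \(\|K_N\|_1=1\) and \(\widehat{K_N}(n)=1-\frac{|n|}{N+1}\) for \(|n|\le N\). In particular \(\widehat{K_N}(2^k)\ge 1/2\) whenever \(2^k\le N/2\). Consequently
\[
\Bigl(\sum_k |\widehat{K_N}(2^k)|^2\Bigr)^{1/2}\ge \tfrac12\sqrt{\log_2 N-1}\longrightarrow\infty .
\]
By the reverse triangle inequality in \(\ell^2\), we get
\[
\Bigl(\sum_k|\widehat{(f+\varepsilon K_N)}(2^k)|^2\Bigr)^{1/2}\ge \frac{\varepsilon}{2}\sqrt{\log_2 N-1}-\sqrt M .
\]
For \(N\) large enough, the right-hand side exceeds \(\sqrt M\). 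For such \(N\), \(f+\varepsilon K_N\notin A_M\), while \(\|(f+\varepsilon K_N)-f\|_1=\varepsilon\). Thus \(A_M\) has empty interior.

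It follows that the complement of \(\mathcal Z_0\) is a countable union of closed nowhere dense sets, i.e. it is of first category. Hence \(\mathcal Z_0\), and therefore \(\mathcal Z\), is residual. Since \(L^1(\mathbb T)\) is a Banach space, \(\mathcal Z\) in fact contains a dense \(G_\delta\) set. The only step needing a concrete choice is the empty-interior step: one needs an \(L^1\)-bounded family whose lacunary Fourier coefficients have unbounded \(\ell^2\) norm. The Fejér kernels serve this purpose, whereas the Dirichlet kernels would not, since their \(L^1\) norms are unbounded.
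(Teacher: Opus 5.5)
Your proof is correct, and in the key step it takes a genuinely different route from the paper.

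The paper also restricts to the dyadic sequence \(2^k\). It proves that \(\mathcal Z_2\) is \(G_\delta\) by the same closedness argument you use. It gets density differently: it first produces one explicit element \(f_0\in\mathcal Z_2\). By Katznelson's theorem on convex sequences, the even convex sequence \(a_n=(\log(|n|+2))^{-a}\), \(0<a\le\tfrac12\), is the Fourier coefficient sequence of a nonnegative integrable function, and \(\sum_k a_{2^k}^2=+\infty\). The paper then observes that \(f_0+\mathcal P_{\mathrm{trig}}\subset\mathcal Z_2\) is a dense translate.

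You instead show directly that each closed set \(A_M\) is nowhere dense. You use the Fej\'er kernels as an \(L^1\)-bounded family whose dyadic coefficients have unbounded \(\ell^2\)-norm. This is the condensation-of-singularities form of the Banach--Steinhaus theorem, applied to the maps \(f\mapsto(\widehat f(2^k))_{k\le K}\) from \(L^1(\mathbb T)\) to \(\ell^2\).

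What each approach buys:
\begin{itemize}
\item Yours is more elementary. It needs only the positivity, normalization and explicit coefficients of \(K_N\), with no external theorem. Nonemptiness of \(\mathcal Z_2\) comes out of Baire's theorem rather than being an input.
\item The paper's needs an extra ingredient. In return it gives a concrete witness and the explicit dense subset \(f_0+\mathcal P_{\mathrm{trig}}\) of \(\mathcal Z_2\). This ``one element plus a dense translate'' pattern is the one the paper reuses for its other coefficient conditions.
\end{itemize}

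One small point: the number of \(k\ge1\) with \(2^k\le N/2\) is \(\lfloor\log_2 N\rfloor-1\), not \(\log_2 N-1\). So your displayed lower bound is best stated for \(N=2^m\), or with the floor. Only unboundedness is used, so this does not affect the argument.
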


To this end, we consider the fixed dyadic lacunary sequence \((2^k)_{k\geq 1}\)
and define
\[
\mathcal Z_2
:=
\left\{
f\in L^1(\mathbb T):
\sum_{k=1}^{\infty} |\widehat f(2^k)|^2 = +\infty
\right\}.
\]

\begin{proposition}\label{Z_2 generic}
The set \(\mathcal Z_2\) is a dense \(G_\delta\) subset of \(L^1(\mathbb T)\).
\end{proposition}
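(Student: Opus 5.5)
We will write \(\mathcal Z_2\) as a countable intersection of open sets and show each of them is dense. For \(N\in\mathbb N\) put
\[
E_N:=\Bigl\{f\in L^1(\mathbb T): \text{there exists } K\ge 1 \text{ with } \sum_{k=1}^{K}|\widehat f(2^k)|^2>N\Bigr\}.
\]
Then \(\mathcal Z_2=\bigcap_{N\ge1}E_N\), since divergence of the series is the same as unboundedness of its partial sums. Each map \(f\mapsto \widehat f(n)\) is a continuous linear functional on \(L^1(\mathbb T)\), because \(|\widehat f(n)|\le\|f\|_1\). Hence each finite sum \(f\mapsto\sum_{k=1}^K|\widehat f(2^k)|^2\) is continuous. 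Therefore \(E_N\), being a union over \(K\) of preimages of \((N,\infty)\), is open, and \(\mathcal Z_2\) is \(G_\delta\).

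Since \(L^1(\mathbb T)\) is a Banach space, Baire's theorem reduces density of \(\mathcal Z_2\) to density of each \(E_N\). Fix a trigonometric polynomial \(P\); these are dense in \(L^1\). Fix also \(\varepsilon>0\). We seek \(h\) with \(\|h\|_1<\varepsilon\) such that \(P+h\in E_N\).

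The key point, and the only real obstacle, is that the \(L^1\) norm does not control \(\ell^2\) norms of Fourier coefficients on lacunary sets. We need a function that is small in \(L^1\) but has a large lacunary \(\ell^2\) sum. A one-term perturbation \(c\,e^{i2^k t}\) will not do, because its \(L^1\) norm is \(|c|\), so a single coefficient cannot be enlarged cheaply. Instead we use many dyadic frequencies at once, with a kernel. Take the Dirichlet-type kernel restricted to the frequencies \(2^k\) supported far beyond the spectrum of \(P\), or more simply the polynomial
\[
h=\delta\sum_{k=M+1}^{M+L} e^{i2^k t}\cdot \phi ,
\]
where \(\phi\) is replaced by a better choice as follows. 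Consider \(g=\varepsilon\, F_n\), with \(F_n\) the Fejér kernel, so that \(\|g\|_1=\varepsilon\) and \(\widehat g(m)=\varepsilon(1-|m|/(n+1))\ge \varepsilon/2\) for \(|m|\le n/2\). This \(g\) has about \(\log_2 n\) dyadic frequencies \(2^k\le n/2\), each carrying a coefficient at least \(\varepsilon/2\). Hence
\[
\sum_k|\widehat g(2^k)|^2\ge \tfrac{\varepsilon^2}{4}\bigl(\log_2 n-1\bigr)\to\infty .
\]

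To keep \(P\) from interfering, note that \(P\) has finitely many nonzero coefficients. We therefore either restrict the sum to \(k\) with \(2^k>\deg P\), or use the inequality \(|a+b|^2\ge \tfrac12|b|^2-|a|^2\). In either case, for \(n\) large enough,
\[
\sum_{k\le K}|\widehat{(P+g)}(2^k)|^2\ge \tfrac{\varepsilon^2}{8}(\log_2 n-1)-\sum_k|\widehat P(2^k)|^2>N .
\]
Thus \(P+g\in E_N\) and \(\|P+g-P\|_1=\varepsilon\), so \(E_N\) is dense. The expected sticking point is exactly finding this \(L^1\)-small, lacunary-large perturbation. The Fejér kernel resolves it through the logarithmic count of dyadic frequencies below \(n\).
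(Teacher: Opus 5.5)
Your proof is correct, but the density half takes a genuinely different route from the paper.

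\textbf{The $G_\delta$ part.} This is the same argument in dual form. The paper writes the complement as $\bigcup_M\bigcap_N\{S_N(f)\le M\}$ with closed sets. You write $\mathcal Z_2=\bigcap_N E_N$ with open sets. Both rest only on the continuity of $f\mapsto\widehat f(n)$.

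\textbf{The paper's density argument.} The paper first produces one explicit element $f_0\in\mathcal Z_2$ (Lemma~\ref{Duren lacunary example}). The convex sequence $(\log(|n|+2))^{-a}$, $0<a\le\frac12$, is the Fourier transform of an $L^1$ function by Katznelson's theorem, and its dyadic $\ell^2$ sum diverges. The paper then notes that $f_0+\mathcal P_{\mathrm{trig}}\subset\mathcal Z_2$ and that this set is dense. So Baire's theorem is never used for density.

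\textbf{Your density argument.} You show that each open $E_N$ is dense via the perturbation $\varepsilon F_n$. This uses only $\|F_n\|_1=1$ and $\widehat{F_n}(m)\ge\frac12$ for $|m|\le n/2$; you then conclude with Baire. This is more elementary, since it avoids the convex-coefficient theorem. It also exposes the real mechanism. The truncated maps $T_K f=(\widehat f(2^k))_{k\le K}$ from $L^1$ to $\ell^2$ have norms of order $\sqrt K$: take $F_n$ with $n+1\ge 2^{K+1}$ for the lower bound, and use $|\widehat f(m)|\le\|f\|_1$ for the upper bound. Your argument is therefore the Banach--Steinhaus condensation-of-singularities principle made concrete.

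\textbf{What each route buys.} The paper's route gives an explicit witness. It also gives the stronger structural fact that $\mathcal Z_2$ contains a dense affine subspace, a translate of the trigonometric polynomials, which fits the paper's interest in large linear structures. Your route yields elements of $\mathcal Z_2$ only as a byproduct of Baire.

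\textbf{Minor points.}
\begin{itemize}
\item Delete the abandoned sentence introducing $h=\delta\sum_k e^{i2^kt}\cdot\phi$. It is not a well-defined construction and plays no role.
\item The number of $k\ge1$ with $2^k\le n/2$ is $\lfloor\log_2 n\rfloor-1\ge\log_2 n-2$, not $\log_2 n-1$. This off-by-one is harmless, since only divergence matters.
\end{itemize}
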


\begin{proof}[Proof of Theorem~\ref{Z residual}]
The dyadic sequence
\[
n_k = 2^k, \qquad k\geq 1,
\]
is lacunary. Therefore,
\[
\mathcal Z_2 \subset \mathcal Z.
\]
By Proposition~\ref{Z_2 generic}, the set \(\mathcal Z_2\) is non-empty and a dense
\(G_\delta\) subset of \(L^1(\mathbb T)\). Hence \(\mathcal Z\) contains a
dense \(G_\delta\) subset of \(L^1(\mathbb T)\) and consequently, \(\mathcal Z\)
is residual in \(L^1(\mathbb T)\).
\end{proof}

\subsection{Topological/Algebraic genericity and Spaceability in Hardy spaces under conditions on Taylor coefficients}
\par\vspace{0.5em}\noindent

For a holomorphic function $f(z)=\sum_{n=0}^{\infty}\beta_n(f)z^n$ on the unit disc we define $\beta(f)$  to be the sequence of Taylor coefficients, thus we write: $\beta(f)=\{\beta_n(f) \}_{n=0}^{\infty}$. Let \(H(\mathbb D)\) denote the space of holomorphic
functions on \(\mathbb D\), endowed with the topology of uniform convergence on
compact subsets.

\begin{theorem}\label{alg spac criter}
Let \(X\) be a non-trivial metrizable topological vector space continuously embedded in \(H(\mathbb D)\), whose topology is induced by a translation-invariant metric. Let
\(0<p\leq \infty\), and set
\[
\mathcal A_p(X)
=
\{f\in X:\beta(f)\notin \ell^p\}.
\]
Assume that:
\begin{enumerate}
    \item the set \(\mathcal A_p(X)\) is nonempty;
    \item the polynomials are dense in \(X\);
    \item if \(f\in X\), then the functions \(f(-z)\), \(zf(z)\), and
    \(f(z^N)\), \(N\in\mathbb N\), also belong to \(X\).
\end{enumerate}
Then \(\mathcal A_p(X)\) is topologically generic, algebraically generic, and
spaceable in \(X\).
\end{theorem}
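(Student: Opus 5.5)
The plan is to reduce all three assertions to one construction: functions in \(\mathcal A_p(X)\) whose Taylor coefficients are supported on pairwise disjoint arithmetic progressions. The first ingredient is that, since \(X\) embeds continuously in \(H(\mathbb D)\) and \(f\mapsto\beta_n(f)\) is continuous on \(H(\mathbb D)\) by the Cauchy estimates, every coefficient functional \(f\mapsto \beta_n(f)\) is continuous on \(X\). The second ingredient is that the complement \(\{f\in X:\beta(f)\in\ell^p\}\) is a vector subspace, because \(\ell^p\) is a vector space also for \(0<p<1\).

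\emph{Topological genericity.} For \(0<p<\infty\) I will write the complement as
\[
\bigcup_{M\in\mathbb N}\ \bigcap_{N\in\mathbb N}\Bigl\{f\in X:\ \sum_{n=0}^N|\beta_n(f)|^p\le M\Bigr\}.
\]
For \(p=\infty\) I will use the sets \(\{f\in X:\ |\beta_n(f)|\le M\ \forall n\}\) instead. Each of these sets is closed by continuity of the coefficient functionals, so the complement is \(F_\sigma\) and \(\mathcal A_p(X)\) is \(G_\delta\). For density, fix \(f_0\in\mathcal A_p(X)\) (hypothesis (1)) and a polynomial \(P\), and consider \(P+\varepsilon f_0\). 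Its coefficients differ from those of \(\varepsilon f_0\) only at finitely many indices, so it lies in \(\mathcal A_p(X)\). By translation invariance,
\[
d(P+\varepsilon f_0,P)=d(\varepsilon f_0,0)\to0 \quad (\varepsilon\to0).
\]
Together with hypothesis (2) this shows \(\mathcal A_p(X)\) is dense; no Baire property of \(X\) is needed.

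\emph{Building blocks.} Using hypothesis (3), set
\[
g_k(z)=z^{2^k}f_0\bigl(z^{2^{k+1}}\bigr),\qquad k\ge0 .
\]
Then \(g_k\in X\), and its coefficients are those of \(f_0\), placed on the class \(I_k=\{n\ge1:\ n\equiv 2^k \pmod{2^{k+1}}\}\) and zero elsewhere. The classes \(I_k\) are pairwise disjoint and partition \(\mathbb N\setminus\{0\}\). Hence for any finite combination \(\sum c_kg_k\) the coefficients on \(I_k\) are exactly \(c_k\beta(f_0)\). So if some \(c_k\ne0\), the combination lies in \(\mathcal A_p(X)\), and the \(g_k\) are linearly independent.

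\emph{Algebraic genericity.} Enumerate a countable dense set of polynomials (e.g.\ rational coefficients) as \((P_j)\), with every polynomial repeated infinitely often. Choose \(\varepsilon_j>0\) with \(d(\varepsilon_jg_j,0)<1/j\), and put \(h_j=P_j+\varepsilon_jg_j\). The span of \((h_j)\) is dense, since each \(P\) in the list is approximated by \(h_j\) with arbitrarily large \(j\). A nonzero combination \(\sum c_jh_j\) equals a polynomial plus \(\sum c_j\varepsilon_jg_j\), and the latter has some \(c_j\ne0\). Since the polynomial alters only finitely many coefficients, the combination lies in \(\mathcal A_p(X)\).

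\emph{Spaceability.} This is the step needing care, since \(X\) is not assumed complete and closures could a priori pick up \(\ell^p\)-functions. Let \(Z\) be the closure in \(X\) of \(\operatorname{span}\{g_k\}\); it is a closed, infinite-dimensional subspace. Take \(f\in Z\) as the limit of combinations \(\sum_k c_k^{(m)}g_k\), and fix an index \(j_0\) with \(\beta_{j_0}(f_0)\ne0\). Continuity of the coefficient functional at the position in \(I_k\) corresponding to \(j_0\) forces \(c_k^{(m)}\to c_k\) for some scalar \(c_k\). Applying continuity at every index of \(I_k\) then gives \(\beta_n(f)=c_k\beta_{j}(f_0)\) for each \(n=2^k+j2^{k+1}\in I_k\). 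Moreover, \(\beta_0(f)=0\). Therefore if \(f\ne0\), some \(c_k\ne0\), so \(\beta(f)\) contains a nonzero multiple of \(\beta(f_0)\notin\ell^p\) and \(f\in\mathcal A_p(X)\). Thus \(Z\subset\mathcal A_p(X)\cup\{0\}\), which gives spaceability. The hypothesis on \(f(-z)\) appears not to be needed in this route.
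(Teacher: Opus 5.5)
Your proof is correct and follows essentially the same route as the paper: the same $F_\sigma$ description of the complement, density via polynomial perturbations of a fixed element of $\mathcal A_p(X)$, building blocks with Taylor support on the disjoint sets $\{2^k(2m+1)\}$, the dense family $P_j+c_jg_j$, and coefficientwise limits to control the closed span. The only substantive difference is your block $z^{2^k}f_0(z^{2^{k+1}})$, which places $\beta(f_0)$ directly on these sets instead of first extracting an odd-supported function via $f(-z)$ as the paper does, and this correctly shows that the hypothesis on $f(-z)$ is superfluous.
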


\begin{remark}
For the topological genericity conclusion, only assumptions \((1)\) and \((2)\)
are needed.
\end{remark}

\begin{corollary}\label{cor:A-infty-A1}
The sets $\mathcal A_\infty\left(\bigcap_{0<p<1}H^p\right)$ and $\mathcal A_1\bigl(A(\mathbb D)\bigr),$ introduced in~\cite{pandis2024some}, are topologically generic,
algebraically generic, and spaceable in \(\bigcap_{0<p<1}H^p\) and
\(A(\mathbb D)\), respectively. In particular, the topological genericity
conclusions recover the corresponding results of~\cite{pandis2024some}. 
\end{corollary}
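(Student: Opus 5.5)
The plan is to apply Theorem~\ref{alg spac criter} twice: once with \(X=\bigcap_{0<p<1}H^p\) and \(p=\infty\), and once with \(X=A(\mathbb D)\) and \(p=1\). For each space I will check the standing hypotheses and then conditions (1)--(3).

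\emph{The space \(\bigcap_{0<p<1}H^p\).} I give it the metric
\[
d(f,g)=\sum_{n\ge 2}2^{-n}\min\{1,\|f-g\|_{p_n}^{p_n}\},\qquad p_n=1/n .
\]
Each quantity \(\|\cdot\|_{p}^{p}\) with \(p<1\) is a translation-invariant \(p\)-norm, and \(\|f\|_p\) is increasing in \(p\). Hence \(d\) induces the projective-limit topology, and that topology is metrizable and translation invariant. The continuous embedding into \(H(\mathbb D)\) follows from the pointwise estimate
\[
|f(z)|\le C_p\,(1-|z|)^{-1/p}\|f\|_p .
\]
Condition (2) holds because in each \(H^p\) with \(0<p<\infty\) the dilates \(f_r\) converge to \(f\), and each \(f_r\) is uniformly approximated on \(\overline{\mathbb D}\) by its Taylor partial sums. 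The approximation is therefore simultaneous in every \(p\), so for a target \(d\)-distance one only needs to control finitely many \(p_n\).

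Condition (3) is elementary for these maps:
\begin{itemize}
\item \(f(-z)\) is a rotation, so it preserves each \(H^p\) norm;
\item \(|zf(z)|=|f(z)|\) on circles, so multiplication by \(z\) also preserves the norm;
\item \(f(z^N)\) has the same \(H^p\) norm as \(f\), because \(\theta\mapsto N\theta\) preserves Lebesgue measure on \(\mathbb T\) and \(M_p(r^N,f)\le\|f\|_p\).
\end{itemize}
Condition (1) is the existence result proved in \cite{pandis2024some}, namely a function in \(\bigcap_{0<p<1}H^p\) with unbounded Taylor coefficients. If one wants it explicit, \(f(z)=(1-z)^{-1}\) has coefficients bounded by \(1\), so it does not work. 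The cleaner route is to cite \cite{pandis2024some}, where nonemptiness is part of the genericity theorem. Alternatively, one can use a lacunary-type construction
\[
f=\sum_k c_k\,(1-z)^{-1}\text{-like blocks},
\]
chosen so that \(|\beta_{n_k}|\to\infty\) while \(\|\cdot\|_p\) converges for each \(p<1\).

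\emph{The disc algebra \(A(\mathbb D)\).} This space carries the supremum norm, so it is a Banach space, hence metrizable with a translation-invariant metric. It embeds continuously into \(H(\mathbb D)\), and it is non-trivial. Condition (2) is Fejér's theorem: the Cesàro means of \(f\in A(\mathbb D)\) are polynomials and converge uniformly on \(\overline{\mathbb D}\). Condition (3) is clear, since \(f(-z)\), \(zf(z)\) and \(f(z^N)\) are continuous on \(\overline{\mathbb D}\) and holomorphic inside. Condition (1) asks for an element of \(A(\mathbb D)\) whose Taylor coefficients are not absolutely summable. This is classical; for example, a Hardy--Littlewood-type series \(\sum n^{-1}e^{in\log n}z^n\) works. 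Alternatively, one can argue by contradiction: if every coefficient sequence were in \(\ell^1\), the closed graph theorem would make \(A(\mathbb D)\) isomorphic to the Wiener algebra, which is false since the sup norm and the \(\ell^1\) norm are not equivalent. The simplest option is again to invoke \cite{pandis2024some}.

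Theorem~\ref{alg spac criter} then yields topological genericity, algebraic genericity and spaceability in both cases. Since topological genericity is exactly the statement of \cite{pandis2024some}, that part is recovered. I expect the main obstacle to be checking that the Fréchet-type topology on \(\bigcap_{0<p<1}H^p\) fits the hypotheses, that is, translation invariance together with density of polynomials uniformly across all \(p\). Once that is in place, the rest is routine verification.
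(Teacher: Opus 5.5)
Your route is the paper's. You apply Theorem~\ref{alg spac criter} twice: with \(X=\bigcap_{0<p<1}H^p\) and \(p=\infty\), and with \(X=A(\mathbb D)\) and \(p=1\). You check density of polynomials and stability under \(f(-z)\), \(zf(z)\), \(f(z^N)\), and take nonemptiness from \cite{pandis2024some}. Where the paper invokes Littlewood subordination, you compute \(M_p(r,f(z^N))=M_p(r^N,f)\) directly; the two are equivalent.

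Your extra witnesses for \(A(\mathbb D)\) are fine. For instance, \(\sum n^{-1}e^{in\log n}z^n\) has partial sums \(O(\sqrt N)\) uniformly on \(\mathbb T\) by van der Corput, and Abel summation then gives uniform convergence. The vague ``lacunary blocks'' for \(\bigcap_{0<p<1}H^p\) can be replaced by the explicit function \(\frac{1}{1-z}\log\frac{1}{1-z}=\sum_n H_nz^n\) from the remark following the corollary.

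One step is wrong as written, and it is the one you singled out as the main obstacle. With \(p_n=1/n\) the exponents decrease to \(0\). Since \(\|g\|_p\) is increasing in \(p\), every term of your \(d\) is controlled by the \(n=2\) term, so \(d(f_k,f)\to0\) if and only if \(f_k\to f\) in \(H^{1/2}\). Your \(d\) therefore induces the \(H^{1/2}\)-topology on the intersection, which is strictly coarser than the topology of \(\bigcap_{0<p<1}H^p\).

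To see the difference, take \(g=(1-z)^{-3/2}\in H^{1/2}\setminus H^{3/4}\) and choose \(r_k\uparrow1\) suitably. The differences \(g(r_{k+1}z)-g(r_kz)\) lie in the intersection and tend to \(0\) in \(H^{1/2}\), but not in \(H^{3/4}\).

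If you feed this metric into Theorem~\ref{alg spac criter}, you only get the density statements (topological and algebraic genericity) for the \(H^{1/2}\)-topology. Density in a coarser topology does not imply density in the intended finer one; only closedness and the \(G_\delta\) property transfer upward.

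The fix is to let the exponents increase to \(1\), e.g.\ \(p_n=1-1/n\), as in the metric of Section~\ref{sec:2}. Monotonicity then makes \(d\)-convergence equivalent to convergence in every \(H^p\) with \(p<1\), and the rest of your verification goes through unchanged.
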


\begin{remark}
The same argument applies to \(H^p\), \(0<p<1\), for the set $\mathcal A_\infty(H^p)$
because \(H^p\) satisfies the same structural assumptions and the function
\[
g(z)=\frac{1}{1-z}\log\left(\frac{1}{1-z}\right)=\sum_{n=1}^{\infty}H_n z^n
\]
belongs to \(H^p\), for $0<p<1$. Where $H_n:=\sum_{j=1}^n\frac{1}{j}$ is the $n$-th harmonic sum.

\end{remark}

With minor modifications in the proof of Theorem~\ref{alg spac criter}, one can
prove the following result.

For \(n\in\mathbb N_0\), the set
\[
\mathcal C_n
:=
\left\{
f\in \bigcap_{0<p<1} H^p:
\beta(f^{(n)})\notin \ell^\infty
\right\},
\]
where \(f^{(0)}=f\), was introduced in~\cite{pandis2024some}, where its
topological genericity was also proved. Motivated by the study of algebraic
genericity in difference sets, see for instance
\cite{bernal2026topological,axarlis2023topological,konidas2026double}, we prove
the following result.

\begin{theorem}\label{C_n-C_n-1}
For every \(n\in\mathbb N\), the set
\[
\mathcal D_n
:=
\mathcal C_n\setminus \mathcal C_{n-1}
=
\left\{
f\in \bigcap_{0<p<1} H^p:
\beta(f^{(n)})\notin \ell^\infty
\ \text{and}\
\beta(f^{(n-1)})\in \ell^\infty
\right\}
\]
is algebraically generic in \(\bigcap_{0<p<1} H^p\).
\end{theorem}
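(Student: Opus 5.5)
Let $X=\bigcap_{0<p<1}H^p$, a Fréchet space whose topology is given by the translation-invariant metric $d(f,g)=\sum_j 2^{-j}\min\{1,\|f-g\|_{p_j}^{p_j}\}$ with $p_j\uparrow 1$. The plan is to exhibit a dense vector subspace of the form
\[
D=\mathcal P+\operatorname{span}\{h_m:m\in\mathbb N\},
\]
where $\mathcal P$ denotes the polynomials and every nonzero element of $\operatorname{span}\{h_m\}$ lies in $\mathcal D_n$. Since $D\supset\mathcal P$, it is dense by assumption (2) of Theorem~\ref{alg spac criter}, which holds for $X$. Adding a polynomial changes only finitely many coefficients of $f^{(n)}$ and $f^{(n-1)}$, so membership in $\mathcal D_n$ is preserved. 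Hence every element of $D\setminus\mathcal P$ lies in $\mathcal D_n$. If $f=P+\sum c_m h_m$ with all $c_m=0$, then $f$ is a polynomial and not in $\mathcal D_n$. To avoid this I will instead take
\[
D=\operatorname{span}\{h_m\}\oplus\Big\{P+\textstyle\sum_m c_m h_m\Big\}.
\]
The clean version is as follows: choose $h_1\in\mathcal D_n$ and let $D=\{P+\sum c_mh_m:\ (c_m)\neq 0\}\cup\{0\}$. This is not a vector space, so instead I will approximate polynomials by elements of the span. Concretely, set $D=\operatorname{span}\{P_k+\varepsilon_k h_{k}\}$, where $(P_k)$ enumerates a dense countable set of polynomials (rational coefficients) and $\varepsilon_k\to0$ quickly. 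Density follows because $d(P_k+\varepsilon_kh_k,P_k)\to 0$ can be made small. A nonzero finite combination $\sum c_k(P_k+\varepsilon_kh_k)$ equals a polynomial plus $\sum c_k\varepsilon_k h_k$. So it suffices that every nonzero finite combination of the $h_k$ lies in $\mathcal D_n$, and also that $\sum c_k\varepsilon_kh_k=0$ forces all $c_k=0$ (linear independence), so that the combination is not a polynomial when some $c_k\neq 0$.

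The construction of $h_k$ uses disjoint frequency supports. Pick a base function $g\in X$ with $\beta(g^{(n)})$ unbounded but $\beta(g^{(n-1)})$ bounded. The natural choice is
\[
g(z)=\sum_{j\ge1}\frac{z^j}{j^{\,n-1}},
\]
which is a combination of polylog-type functions. Its coefficients $j^{1-n}$ give $\beta_j(g^{(n-1)})\approx j^{1-n}\cdot j^{n-1}\sim$ bounded, and $\beta_j(g^{(n)})\sim j$, unbounded. Membership in $H^p$ for all $p<1$: $g$ behaves like $(1-z)^{-1}$ times logs for $n=1$, and it is smoother for $n\ge2$, so $g\in\bigcap_{p<1}H^p$ (via the known coefficient/growth description, as in~\cite{pandis2024some}). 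To get disjoint supports, set $h_k(z)=\sum_{j\in S_k}j^{1-n}z^j$, where $(S_k)$ are disjoint infinite sets, e.g.\ $S_k=\{2^{k-1}(2i+1)\}$. Then $h_k$ is obtained from $g$ by operations preserving $X$: $h_k(z)=2^{-(k-1)(n-1)}\bigl(g(z^{2^{k-1}})\text{ odd part}\bigr)$, where the odd part is $\tfrac12(G(z)-G(-z))$. This uses assumption (3) of Theorem~\ref{alg spac criter}.

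For the verification, for a nonzero combination $F=\sum_{k\le K}c_kh_k$ the supports are disjoint, so $\beta_j(F^{(n-1)})$ is bounded by $\max|c_k|$ times the bound of $\beta(g^{(n-1)})$. Pick $k$ with $c_k\neq0$; then along $j\in S_k$, $|\beta_j(F^{(n)})|=|c_k|\,j^{1-n}\,j(j-1)\cdots$, which grows like $|c_k|j$, hence is unbounded. Disjoint supports also give linear independence. The main obstacle is verifying $g\in\bigcap_{0<p<1}H^p$ rigorously. For that I would write $g$ for $n=1$ as $\log\frac1{1-z}\in\bigcap_{p<\infty}H^p$. Wait: then $\beta(g^{(1)})$ is $(1,1,1,\dots)$, which is bounded, so the coefficients must be chosen differently. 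I will take $\beta_j(g)=j^{-n}\cdot j\,\sigma_j$ where $\sigma_j$ grows slowly, e.g.\ $\beta_j(g)=j^{1-n}\log j$ restricted to be bounded after $n-1$ derivatives. That conflicts, so the correct choice is $\beta_j(g)=j^{-(n-1)}$ on a lacunary-type set. Take
\[
g(z)=\sum_i 2^{-i(n-1)}z^{2^i}\cdot\text{(nothing else)},
\]
so that $\beta(g^{(n-1)})$ is bounded (about $1$) and $\beta(g^{(n)})\sim 2^i$ is unbounded. For $n\ge2$, $g$ has absolutely summable coefficients, hence $g\in A(\mathbb D)\subset X$. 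For $n=1$, I would use $g(z)=\frac1{1-z}\in\bigcap_{p<1}H^p$, whose coefficients are bounded while $\beta(g')=(j+1)$ is unbounded. Both choices work, and the supports are split as above.
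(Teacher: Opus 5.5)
Your overall architecture matches the paper's: you take the span of $P_k+\varepsilon_k h_k$, where $(P_k)$ enumerates the polynomials with coefficients in $\mathbb Q+i\mathbb Q$ and the $h_k$ are disjointly supported functions whose nonzero combinations lie in $\mathcal D_n$. (For density you also need every tail of $(P_k)$ to be dense; this holds because the space has no isolated points.)

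\textbf{Gap.} The instantiation you finally commit to fails for $n\ge2$. You take the lacunary function $g(z)=\sum_i 2^{-i(n-1)}z^{2^i}$ and say its supports are ``split as above'', i.e.\ along $S_k=\{2^{k-1}(2i+1)\}$. But $\{2^i\}\cap S_k\subset\{2^{k-1}\}$, so each $h_k$ is at most a single monomial. Equivalently, the odd part of $g$ is $z$ or $0$. Hence $\beta(h_k^{(n)})$ is finitely supported, and the unboundedness along $S_k$ that your verification uses does not exist.

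There is a second problem. As written, ``odd part of $g(z^{2^{k-1}})$'' vanishes for $k\ge2$, because $z\mapsto g(z^{2^{k-1}})$ is even. What you need, and what the paper uses, is $\tfrac12\bigl(g(w)-g(-w)\bigr)$ evaluated at $w=z^{2^{k-1}}$.

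\textbf{Repair.} Your first candidate $g(z)=\sum_{j\ge1}j^{1-n}z^j$ already works. You discarded it because of an index slip.
\begin{itemize}
\item For $n=1$ this $g$ is $z/(1-z)$, not $\log\frac1{1-z}$. It has bounded coefficients, $g'=(1-z)^{-2}$ has unbounded coefficients, and $z/(1-z)\in\bigcap_{0<p<1}H^p$.
\item For $n=2$ the function is $\log\frac1{1-z}$. Here $g'=1/(1-z)$ has bounded coefficients and $g''$ has unbounded ones, exactly as required.
\item For $n\ge3$ the coefficients are absolutely summable, so $g\in A(\mathbb D)$.
\end{itemize}
With the corrected odd-part operation, $h_k=2^{-(k-1)(n-1)}g_o(z^{2^{k-1}})=\sum_{j\in S_k}j^{1-n}z^j$. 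This lies in the space by the same $f(-z)$, $f(z^N)$ stability used in the paper, and your verification then goes through verbatim.

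Alternatively, you can keep the lacunary $g$ for $n\ge2$ but split its exponent set into infinitely many infinite pieces. For example, use $h_k=g(z^{2k+1})$: these have disjoint supports $\{(2k+1)2^i\}$ and remain in $A(\mathbb D)$.

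\textbf{Comparison.} Once repaired, your route differs from the paper's only in the base functions. The paper takes $(n-1)$-fold primitives of odd dyadic dilates of $(1-z)^{-\alpha}$. To stay inside $\bigcap_{0<p<1}H^p$ this needs the Nestoridis--Thirios theorem, and it also needs the asymptotics of $\binom{k+\alpha-1}{k}$. Your explicit coefficient choice needs only that $z/(1-z)$ and $\log\frac1{1-z}$ belong to $\bigcap_{0<p<1}H^p$, together with absolute summability for $n\ge3$. It is therefore more elementary.
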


\begin{remark}
The same result holds if \(\bigcap_{0<p<1}H^p\) is replaced by \(H^p\), for any
fixed \(0<p<1\).
\end{remark}

Finally, let \(f\in H^1\), and let $F(f)(z):=\int_0^z f(\zeta)\,d\zeta$ be its primitive vanishing at the origin. We write $a(f):=(a_n(f))_{n=0}^{\infty}$ for the Taylor coefficients of \(F(f)\). For \(0<p<1\), Nestoridis~\cite{NestoridisGenericH1}
showed that the set
\[
\Lambda_p
:=
\left\{
f\in H^1:a(f)\notin \ell^p
\right\}
\]
is dense \(G_\delta\) in \(H^1\). Moreover, Baire's theorem guarantees that
\[
\Lambda
:=
\bigcap_{0<p<1}\Lambda_p
=
\left\{
f\in H^1:a(f)\notin \ell^p \ \text{for all } 0<p<1
\right\}
\]
is dense \(G_\delta\) in \(H^1\) and, in particular, nonempty.

\begin{theorem}\label{alg spac nestor}
The set \(\Lambda\) is algebraically generic and spaceable in \(H^1\).
\end{theorem}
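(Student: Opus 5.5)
We prove Theorem~\ref{alg spac nestor} by constructing explicit subspaces from one fixed function of $\Lambda$, using dilation-type operators $f\mapsto f(z^N)$ and multiplication by $z^m$, in the same way as in the proof of Theorem~\ref{alg spac criter}. The one new feature is that the exceptional condition concerns the coefficients of the primitive, $a_n(f)=\beta_{n-1}(f)/n$ for $n\ge1$, rather than those of $f$ itself. Since $\Lambda$ is dense $G_\delta$, we fix $g\in\Lambda$; we may replace $g$ by a tail $g-P$ with $P$ a polynomial, because $a(g-P)$ and $a(g)$ differ in finitely many terms. Note that $F(f)\in H^1$ whenever $f\in H^1$, by Hardy's inequality.

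\textbf{Spaceability.} The plan is to find a closed infinite-dimensional subspace of $H^1$ all of whose nonzero elements have $a(f)\notin\ell^p$ for every $p<1$.

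\begin{enumerate}
\item Use the blocks of $g$. Write $g=\sum_j g_j$, where $g_j$ is the Taylor block of $g$ on $[m_j,m_{j+1})$. Choose $m_j$ so that $\|g_j\|_{H^1}\le 2^{-j}$ for $j\ge1$ (possible since $H^1$ norms of tails of a fixed function need not tend to $0$ in general; instead use Fejér means, which converge in $H^1$, and take $g_j=\sigma_{m_{j+1}}g-\sigma_{m_j}g$). Also choose the $m_j$ so that $\sum_{n\in \text{block } j}|a_n(g)|^{p_j}\ge 1$ along a sequence $p_j\uparrow1$. This is possible because $\sum_n|a_n(g)|^p=\infty$ for every $p<1$, and tails remain infinite.
\item Split the blocks into infinitely many disjoint infinite families $J_1,J_2,\dots$ and set $h_k=\sum_{j\in J_k}g_j\in H^1$.
\item The $h_k$ have disjoint spectra, and each $h_k$ lies in $\Lambda$: for any $p<1$, eventually $p_j>p$, and since $|a_n|<1$ we get $\sum_{\text{block } j}|a_n|^p\ge \sum_{\text{block } j}|a_n|^{p_j}\ge 1$ infinitely often.
\item Take $Y=\overline{\mathrm{span}}\{h_k\}$. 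Each $f\in Y$ has Taylor coefficients equal to $c_k$ times those of $g$ on the spectrum of $h_k$, because coefficient functionals are continuous on $H^1$. Here $c_k$ is a limit coefficient; it is well defined by picking any coefficient in the support.
\item If $f\ne0$, some $c_k\ne0$, so $a(f)$ restricted to the spectrum of $h_k$ equals $c_k\,a(h_k)\notin\ell^p$. Hence $Y\setminus\{0\}\subset\Lambda$.
\end{enumerate}

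The Fejér-block correction in step 1 is the main technical obstacle. Fejér means do not produce coefficient blocks that are exact restrictions of $g$; they multiply coefficients by factors in $[0,1]$. To keep disjoint spectra I would instead use smooth (de la Vallée Poussin type) cut-offs with overlapping transitions and then keep only every third block in each family. Alternatively, and more simply, I would avoid blocks altogether: take $h_k(z)=g(z^{N_k})$ for primes $N_k$? That fails, since the spectra all contain $0$ and intersect at multiples. I would therefore settle on the sparse-family de la Vallée Poussin blocks, whose coefficients equal those of $g$ on the inner ranges, where most of the $\ell^{p_j}$ mass is arranged to lie.

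\textbf{Algebraic genericity.} Take the closed subspace $Y$ built from $g$, and let $(P_k)$ be a dense sequence of polynomials in $H^1$. Set
\[
D=\mathrm{span}\{P_k+h_k\},
\]
after rescaling each $h_k$ to have norm at most $2^{-k}$, so that $D$ is dense. A nonzero combination $\sum c_kP_k+\sum c_kh_k$ is a polynomial plus a nonzero element of $Y$. Adding a polynomial changes only finitely many coefficients, so the combination lies in $\Lambda$. Note that $\sum c_k h_k\neq 0$ because the $h_k$ have disjoint spectra.
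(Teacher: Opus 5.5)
Your final construction (the de la Vall\'ee Poussin version) is correct, but it takes a different route from the paper. The paper does no spectral decomposition. It takes the primitive $F=F(f)$ of one $f\in\Lambda$ and uses its odd/even parts to produce $H(z)=\sum_m b_mz^{2m+1}$ with $H'\in H^1$ and $(b_m)\notin\ell^p$ for all $p<1$; the fact $F\in H^\infty$, from Hardy's inequality, is what gives $(zF_e)'\in H^1$. It then sets $g_j=\bigl(H(z^{2^j})\bigr)'=2^jz^{2^j-1}H'(z^{2^j})$, which lies in $H^1$ by Littlewood subordination. The primitive coefficients of $g_j$ are exactly the $b_m$, placed on the pairwise disjoint sets $\{2^j(2m+1)\}$. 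This is precisely the repair of the dilation idea you discarded: first pass to odd indices, then dilate by powers of $2$. The paper's route gives exact coefficient copies with no approximation theory, at the cost of a small parity lemma and of using invariance of $H^1$ under $z\mapsto z^{2^j}$. Your route uses only three things: the de la Vall\'ee Poussin means $V_n$ are uniformly bounded on $L^1(\mathbb T)$, membership in $\Lambda$ is a tail property, and exponents $p_j\uparrow 1$ capture all $p<1$ at once. So it would also work in spaces lacking dilation invariance.

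When you write it up, drop the exact-Taylor and Fej\'er false starts: Taylor partial sums need not converge in $H^1$, and Fej\'er differences have overlapping spectra. Then state the construction precisely:
\begin{itemize}
\item Set $g_j=V_{n_{j+1}}g-V_{n_j}g$ with $n_{j+1}\ge 2n_j$ and $\|V_{n_j}g-g\|_{H^1}\le 2^{-j}$.
\item Take $n_{j+1}$ large enough that the range $2n_j\le n-1\le n_{j+1}$, where $g_j$ agrees with $g$, carries $\sum|a_n(g)|^{p_j}\ge 1$.
\item Use no two consecutive indices $j$ across all families (e.g.\ partition the even indices); this is what makes the spectra of the $h_k$ disjoint.
\item Note $|a_n(g)|\le\|g\|_{H^1}/n$, which justifies $|a_n|^p\ge|a_n|^{p_j}$ on late blocks.
\item In step 4, the coefficients of $f\in Y$ on $\mathrm{spec}(h_k)$ are $c_k$ times those of $h_k$, not of $g$; the two differ on the transition ranges.
\end{itemize}
Your algebraic-genericity step is the same as the paper's.
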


Since \(\Lambda \subseteq \Lambda_p\) for every \(0<p<1\), one also obtains the
following.

\begin{corollary}\label{res for lamba_p}
For every \(0<p<1\), the set \(\Lambda_p\) is algebraically generic and
spaceable in \(H^1\).
\end{corollary}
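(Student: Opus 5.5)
The statement to prove is Corollary~\ref{res for lamba_p}, and it follows almost immediately from Theorem~\ref{alg spac nestor}. The plan is to transfer the linear structures found inside $\Lambda\cup\{0\}$ to $\Lambda_p\cup\{0\}$ via the inclusion $\Lambda\subseteq\Lambda_p$. Fix $0<p<1$. By definition,
\[
\Lambda=\bigcap_{0<q<1}\Lambda_q\subseteq\Lambda_p ,
\]
since any $f\in\Lambda$ has $a(f)\notin\ell^q$ for every $q\in(0,1)$, and in particular for $q=p$. Hence $\Lambda\cup\{0\}\subseteq\Lambda_p\cup\{0\}$.

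For \emph{algebraic genericity}, Theorem~\ref{alg spac nestor} supplies a dense vector subspace $V\subseteq H^1$ with $V\subseteq\Lambda\cup\{0\}$. Then $V\subseteq\Lambda_p\cup\{0\}$, and density of $V$ in $H^1$ does not depend on which set contains it. So $\Lambda_p$ is dense-lineable.

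For \emph{spaceability}, Theorem~\ref{alg spac nestor} likewise supplies a closed infinite-dimensional subspace $W\subseteq H^1$ with $W\subseteq\Lambda\cup\{0\}\subseteq\Lambda_p\cup\{0\}$. Closedness and infinite dimensionality are intrinsic properties of $W$ as a subspace of $H^1$, so $W$ witnesses that $\Lambda_p$ is spaceable.

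There is no real obstacle here. The only point to check is that both notions are monotone under enlarging the set $M$: if $M\subseteq M'$, then any subspace contained in $M\cup\{0\}$ is also contained in $M'\cup\{0\}$. This is immediate from the definitions in the introduction. All the substantive work lies in Theorem~\ref{alg spac nestor}, which we are allowed to assume.
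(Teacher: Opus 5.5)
Your argument is correct and is exactly the paper's: the corollary is read off from Theorem~\ref{alg spac nestor} via the inclusion $\Lambda\subseteq\Lambda_p$. Both dense-lineability and spaceability are inherited by supersets, since any subspace lying in $\Lambda\cup\{0\}$ also lies in $\Lambda_p\cup\{0\}$.
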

\section{Preliminaries}\label{sec:2}

Let $ \mathbb D=\{z\in\mathbb C:|z|<1\}$
be the open unit disc, and let \(H(\mathbb D)\) denote the space of holomorphic
functions on \(\mathbb D\), endowed with the topology of uniform convergence on
compact subsets. A holomorphic function \(f:\mathbb D\to\mathbb C\) belongs to
the Hardy space \(H^p\), \(0<p<+\infty\), if
\[
\sup_{0<r<1}
\frac{1}{2\pi}\int_0^{2\pi}|f(re^{i\theta})|^p\,d\theta
<+\infty.
\]
It belongs to \(H^\infty\) if $\sup_{|z|<1}|f(z)|<+\infty.$
 
The space \(H^\infty\), endowed with the supremum norm on \(\mathbb D\), is a
Banach space, but polynomials are not dense in this space. For
\(1\leq p<+\infty\), the space \(H^p\), endowed with the norm
\[
\|f\|_p
=
\sup_{0<r<1}
\left\{
\frac{1}{2\pi}\int_0^{2\pi}|f(re^{i\theta})|^p\,d\theta
\right\}^{1/p},
\]
is also a Banach space. Thus, for \(f,g\in H^p\), their distance is given by
\[
d_p(f,g)
=
\sup_{0<r<1}
\left\{
\frac{1}{2\pi}\int_0^{2\pi}
|f(re^{i\theta})-g(re^{i\theta})|^p\,d\theta
\right\}^{1/p},
\qquad 1\leq p<+\infty.
\]

For \(0<p<1\), we endow \(H^p\) with the metric
\[
d_p(f,g)
=
\sup_{0<r<1}
\frac{1}{2\pi}\int_0^{2\pi}
|f(re^{i\theta})-g(re^{i\theta})|^p\,d\theta,
\qquad f,g\in H^p.
\]
With this metric, \(H^p\) becomes a complete metrizable topological vector space,
whose metric is translation-invariant; in other words, \(H^p\) is an \(F\)-space.
For \(0<p<+\infty\), polynomials are dense in \(H^p\). Moreover, convergence in
\(H^p\), \(0<p\leq+\infty\), implies uniform convergence on compact subsets of
\(\mathbb D\).

For \(a,b\in(0,+\infty]\) with \(a<b\), we have \(H^b\subset H^a\), and the
inclusion map is continuous. Jensen's inequality implies that the map
\[
a\mapsto
\sup_{0<r<1}
\left\{
\frac{1}{2\pi}\int_0^{2\pi}|f(re^{i\theta})|^a\,d\theta
\right\}^{1/a}
\]
is increasing. Clearly, we also have
\[
\sup_{0<r<1}
\left\{
\frac{1}{2\pi}\int_0^{2\pi}|f(re^{i\theta})|^p\,d\theta
\right\}^{1/p}
\leq
\sup_{|z|<1}|f(z)|.
\]

We recall Littlewood's subordination principle.

\begin{theorem}[\cite{duren1970theory}, Theorem~1.7]\label{subord}
Let \(f\) and \(F\) be analytic in \(\mathbb D\), and suppose that
\(f(z)=F(w(z))\), where \(w\) is analytic in \(\mathbb D\) and satisfies
\(|w(z)|\leq |z|\). Then
\[
\frac{1}{2\pi}\int_{0}^{2\pi}|f(re^{i\theta})|^p\, d\theta\leq \frac{1}{2\pi}\int_{0}^{2\pi}|F(re^{i\theta})|^p\, d\theta,
\]
for \(0<p<\infty\).
\end{theorem}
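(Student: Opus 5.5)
We would prove this through the subharmonicity of \(|F|^p\) and a harmonic majorant, using the mean value property on circles of a fixed radius \(r\).

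Fix \(0<r<1\) and \(0<p<\infty\). Since \(F\) is analytic in \(\mathbb D\), the function \(|F|^p=\exp(p\log|F|)\) is subharmonic in \(\mathbb D\) and continuous on the closed disc \(\{|w|\le r\}\). Let \(U\) be the Poisson integral, over the disc \(\{|w|<r\}\), of the boundary values \(|F(re^{it})|^p\). Then \(U\) is harmonic in \(|w|<r\), continuous on \(|w|\le r\), and equal to \(|F|^p\) on \(|w|=r\). By the maximum principle for subharmonic functions,
\[
|F(w)|^p\le U(w),\qquad |w|\le r .
\]

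Next we use the hypothesis on \(w\). From \(|w(z)|\le |z|\) we get \(w(0)=0\), and \(w\) maps \(\{|z|\le r\}\) into \(\{|w|\le r\}\). Hence for \(|z|\le r\),
\[
|f(z)|^p=|F(w(z))|^p\le U(w(z)).
\]
The function \(U\circ w\) is harmonic in \(|z|<r\). To see this, note that the disc is simply connected, so \(U=\operatorname{Re}G\) for some \(G\) analytic there; then \(U\circ w=\operatorname{Re}(G\circ w)\). Moreover \(U\circ w\) is continuous on \(|z|\le r\).

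Applying the mean value property to \(U\circ w\) on the circle \(|z|=r\) gives
\[
\frac{1}{2\pi}\int_0^{2\pi}|f(re^{i\theta})|^p\,d\theta
\le \frac{1}{2\pi}\int_0^{2\pi}U(w(re^{i\theta}))\,d\theta
=U(w(0))=U(0).
\]
By the mean value property of the Poisson integral,
\[
U(0)=\frac{1}{2\pi}\int_0^{2\pi}|F(re^{it})|^p\,dt .
\]
This yields the claim.

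The only delicate point is the mean value step. One must use harmonic functions on the closed disc of radius \(r\), so that the boundary values are attained continuously. This is why we work at a fixed \(r<1\) rather than on the unit circle, and it requires the inclusion \(w(\{|z|\le r\})\subset\{|w|\le r\}\), which follows from \(|w(z)|\le|z|\).
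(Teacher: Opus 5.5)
Your proof is correct. The paper gives no proof of its own and simply cites Duren's Theorem~1.7; your argument (the Poisson-integral harmonic majorant $U$ of $|F|^p$ on $\{|w|\le r\}$, composed with $w$, followed by the mean value property of $U\circ w$ at $w(0)=0$, which extends to the circle $|z|=r$ by continuity on the closed disc) is the standard proof given there.
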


We shall also use the following elementary consequence of Hardy's inequality.

\begin{lemma}\label{f' H^1 f H^{infty}}
If \(f\in H^1\), then \(F(f)\in H^\infty\).
\end{lemma}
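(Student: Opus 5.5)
The plan is to show that \(F(f)\) has a continuous extension to \(\overline{\mathbb D}\); boundedness then follows. Since \(f\in H^1\), it has radial limits \(f(e^{i\theta})\) almost everywhere, and the boundary function belongs to \(L^1(\mathbb T)\). The natural route runs through the classical theorem of Hardy and Littlewood (see \cite{duren1970theory}, Theorem~3.11): if \(f'\in H^1\), then \(f\) extends continuously to \(\overline{\mathbb D}\) and is absolutely continuous on \(\mathbb T\). We apply it to \(F(f)\), whose derivative is \(f\in H^1\).

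Since the lemma is billed as a consequence of Hardy's inequality, we first give a short coefficient-based proof. Write \(f(z)=\sum_{n\ge0}c_nz^n\). Then
\[
F(f)(z)=\sum_{n\ge0}\frac{c_n}{n+1}z^{n+1},
\]
so \(a_{n+1}(f)=c_n/(n+1)\) and \(a_0(f)=0\). Hardy's inequality for \(H^1\) states that
\[
\sum_{n\ge0}\frac{|c_n|}{n+1}\le \pi\|f\|_1 .
\]
This is exactly \(\sum_n|a_n(f)|\le\pi\|f\|_1<\infty\). For \(|z|<1\) we then have
\[
|F(f)(z)|\le\sum_n|a_n(f)|\le\pi\|f\|_1 .
\]
Hence \(F(f)\in H^\infty\) with \(\|F(f)\|_\infty\le\pi\|f\|_1\). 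In fact the series converges absolutely and uniformly on \(\overline{\mathbb D}\), so \(F(f)\) even lies in the disc algebra.

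The only nontrivial input is Hardy's inequality itself. I will cite it, for instance from \cite{duren1970theory}, Theorem~3.15. If a self-contained argument is wanted, I would use the standard proof. First factor \(f=gh\) with \(g,h\in H^2\) and \(\|g\|_2^2=\|h\|_2^2=\|f\|_1\), via the inner–outer factorization. Then bound \(\sum|c_n|/(n+1)\) by comparison with the coefficients of a product involving the \(H^2\) functions. This factorization-and-duality step is the main obstacle. Everything after it is the routine estimate above.
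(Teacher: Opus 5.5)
Your proof is correct and is essentially the paper's argument. Hardy's inequality gives $\sum_n |a_n(f)|\le \pi\|f\|_1<\infty$, so the Taylor series of $F(f)$ converges absolutely on $\overline{\mathbb D}$ and $F(f)$ is bounded; your explicit bound $\|F(f)\|_\infty\le\pi\|f\|_1$ and the remark that $F(f)\in A(\mathbb D)$ are harmless refinements of the same route.
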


\begin{proof}
Since \(f\in H^1\), Hardy's inequality~\cite{duren1970theory} implies that the
primitive \(F(f)\) has an absolutely convergent Taylor series on the closed unit
disc. Therefore, \(F(f)\in H^\infty\).
\end{proof}

Next, for \(0<a\leq+\infty\), we consider the intersection $\bigcap_{0<p<a}H^p.$
 
Convergence in this space is equivalent to convergence in \(H^p\) for every
\(p<a\). Equivalently, if \((p_n)\) is a strictly increasing sequence converging
to \(a\), then the metric on \(\bigcap_{0<p<a}H^p\) is given by
\[
d(f,g)
=
\sum_{n=1}^{\infty}
\frac{1}{2^n}
\frac{d_{p_n}(f,g)}{1+d_{p_n}(f,g)},
\qquad
f,g\in\bigcap_{0<p<a}H^p.
\]
This space is also complete; in fact, it is an \(F\)-space. Clearly, convergence
in \(\bigcap_{0<p<a}H^p\) implies uniform convergence on compact subsets of
\(\mathbb D\).

\begin{proposition}\label{prop2.1}
Polynomials are dense in \(\bigcap_{0<p<a}H^p\), for every
\(0<a\leq+\infty\).
\end{proposition}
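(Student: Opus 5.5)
The plan is to approximate $f$ by dilations and then by Taylor polynomials of the dilations. Fix $f\in\bigcap_{0<p<a}H^p$ and $\varepsilon>0$. Since the metric $d$ is the weighted sum displayed above, it suffices to find a polynomial $P$ with $d_{p_n}(f,P)$ small for $n=1,\dots,N$, where $N$ is chosen with $\sum_{n>N}2^{-n}<\varepsilon/2$. Because $p_n<p_N$ and, by the monotonicity of integral means recalled above (Jensen), smallness in $H^{p_N}$ controls smallness in each $H^{p_n}$ with $n\le N$, the whole problem reduces to a single exponent $q:=p_N<a$. So it is enough to show that polynomials are dense in $H^q$ for each fixed $0<q<\infty$, and that approximation in $H^q$ implies approximation in $H^{p}$ for all $p\le q$.

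\textbf{Step 1 (dilation).} For $0<r<1$ put $f_r(z)=f(rz)$. I will show $d_q(f_r,f)\to0$ as $r\to1^-$. For this I use the nontangential boundary function $f^*\in L^q(\mathbb T)$. Then $f_r\to f^*$ a.e. on $\mathbb T$, and $|f(re^{i\theta})|\le C\,(Mf^*)(\theta)$, where $Mf^*$ is the radial (nontangential) maximal function, which lies in $L^q$ by the Hardy–Littlewood maximal theorem (for $q\le1$, use the standard fact that $\sup_r|f(re^{i\theta})|\in L^q$). Dominated convergence then gives $\int|f_r-f^*|^q\to0$. Since $f-f_r\in H^q$, the quantity $d_q(f,f_r)$ equals the boundary integral of $|f^*-f_r^*|^q$, where the sup over radii equals the boundary value. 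That boundary integral is $\int|f^*(e^{i\theta})-f(re^{i\theta})|^q\,d\theta\to0$.

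\textbf{Step 2 (Taylor truncation).} For fixed $r<1$, $f_r$ is holomorphic on a neighbourhood of $\overline{\mathbb D}$. Its Taylor partial sums $S_m f_r$ therefore converge to $f_r$ uniformly on $\overline{\mathbb D}$. Using the inequality $d_q\text{-mean}\le\sup_{|z|<1}|\cdot|$ noted above (raised to the power $q$ when $q<1$), we get $d_q(f_r,S_mf_r)\to0$. Combining with Step 1 via the triangle inequality (the $d_q$ for $q<1$ is still a metric since $|x+y|^q\le|x|^q+|y|^q$) yields a polynomial $P$ with $d_q(f,P)$ as small as desired. The lower exponents $p_1,\dots,p_N$ are controlled by monotonicity. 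The tail of the series defining $d$ is at most $\varepsilon/2$ because each term is bounded by $2^{-n}$.

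The main obstacle is Step 1 for $q<1$, where the maximal function argument needs the known nontangential maximal estimate for $H^q$; this is classical (e.g. via the factorization $f=Bg$ with $g$ zero-free, $g^{q/2}\in H^2$, in Duren's book). Alternatively, one can simply cite that polynomials are dense in each $H^q$, which was already recalled in Section~\ref{sec:2}. I would take that route, making the whole proof reduce to the reduction to a single exponent $q=p_N$ described above.
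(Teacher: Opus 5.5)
Your proposal is correct and is essentially the paper's proof. Both truncate the series defining $d$ at some $N$, use the monotonicity in $p$ of the integral means to reduce control of $d_{p_1},\dots,d_{p_N}$ to control of $d_{p_N}$, and then invoke the density of polynomials in $H^{p_N}$ for $p_N<\infty$; your dilation-plus-Taylor-truncation sketch is an optional justification of that last fact, which the paper simply cites.
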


\begin{proof}
For the proof it suffices for $f\in\bigcap\limits_{p<a}H^p$ to control $d_{p_n}(f,P)$, for $n=1,\ldots,N$ for any finite $N$. Because of the monotonicity of the map $p\to\sup_{0<r<1}\Big\{\dfrac{1}{2\pi}\int^{2\pi}_0|f(re^{i\theta})-P(re^{i\theta})|^pd\theta\Big\}^{1/p}$
is suffices to control $d_{p_N}(f,P)$. But this is possible, because polynomials are dense in $H^{p_N}$ since $p_N<+\infty$. Thus Proposition
\ref{prop2.1} holds.

\end{proof}

The following theorem of Nestoridis and Thirios generalizes
Theorem~5.12 of~\cite{duren1970theory}.

\begin{theorem}[Nestoridis--Thirios~\cite{nestoridis2022generic}]
\label{Nest-Thirios primitive}
Let \(0<p<1\) and let $f\in \bigcap_{0<\beta<p}H^\beta.$ Then $F(f)\in \bigcap_{0<\gamma<q}H^\gamma,$ for $q=\frac{p}{1-p}$. Moreover, if $f\in \bigcap_{0<\beta<1}H^\beta,$ then $F(f)\in \bigcap_{0<\gamma<\infty}H^\gamma.$
 
\end{theorem}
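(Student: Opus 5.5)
The plan is to reduce the statement to the classical Hardy--Littlewood theorem on primitives, \cite[Theorem~5.12]{duren1970theory}. That theorem says: if \(0<\beta<1\) and \(f\in H^\beta\), then its primitive \(F(f)\) belongs to \(H^{\beta/(1-\beta)}\). The point is that the target exponent depends continuously and monotonically on the source exponent, which allows passage from single Hardy spaces to the intersections in the statement.

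\emph{First assertion.} Fix \(0<p<1\), \(f\in\bigcap_{0<\beta<p}H^\beta\), and \(0<\gamma<q=\frac{p}{1-p}\).
\begin{enumerate}
\item The map \(\phi(\beta)=\frac{\beta}{1-\beta}\) is continuous and strictly increasing on \((0,1)\), with \(\phi(\beta)\to\phi(p)=q\) as \(\beta\uparrow p\).
\item Hence we may choose \(\beta\in(0,p)\) with \(\phi(\beta)>\gamma\).
\item For this \(\beta\) we have \(f\in H^\beta\). The classical theorem then gives \(F(f)\in H^{\phi(\beta)}\).
\item Since \(\gamma<\phi(\beta)\), the inclusion \(H^{\phi(\beta)}\subset H^\gamma\) from Section~\ref{sec:2} (a consequence of Jensen's inequality) yields \(F(f)\in H^\gamma\).
\end{enumerate}
As \(\gamma<q\) was arbitrary, \(F(f)\in\bigcap_{0<\gamma<q}H^\gamma\). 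If \(\phi(\beta)\ge 1\), then \(H^{\phi(\beta)}\) is a normed Hardy space rather than a quasi-normed one. This causes no difficulty, because the inclusions among Hardy spaces hold for all exponents in \((0,\infty]\).

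\emph{Second assertion.} Now let \(f\in\bigcap_{0<\beta<1}H^\beta\) and let \(0<\gamma<\infty\) be arbitrary. Since \(\phi(\beta)\to+\infty\) as \(\beta\uparrow 1\), we choose \(\beta<1\) with \(\phi(\beta)>\gamma\). The same three-line argument gives \(F(f)\in H^{\phi(\beta)}\subset H^\gamma\). Hence \(F(f)\in\bigcap_{0<\gamma<\infty}H^\gamma\).

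The only substantive ingredient is the classical Theorem~5.12, which we assume as stated in \cite{duren1970theory}. Its proof rests on two facts: the growth estimate \(M_\infty(r,f)=O\bigl((1-r)^{-1/\beta}\bigr)\) for \(f\in H^\beta\), and the Hardy--Littlewood integral-means argument for fractional integration. If one insisted on reproving it, that step would be the main obstacle. Given the citation, the remaining work consists only of choosing \(\beta\) close enough to \(p\) (respectively to \(1\)), which is the approximation step described above.
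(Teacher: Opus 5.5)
Your argument is correct. The paper gives no proof of its own: it cites Nestoridis--Thirios and presents the result as a generalization of \cite[Theorem~5.12]{duren1970theory}. Your reduction is the natural one implicit in that framing. For a given \(\gamma\), you choose \(\beta<p\) (respectively \(\beta<1\)) with \(\beta/(1-\beta)>\gamma\). You then apply the Hardy--Littlewood theorem to get \(F(f)\in H^{\beta/(1-\beta)}\), and the inclusion \(H^{\beta/(1-\beta)}\subset H^{\gamma}\) finishes the argument.
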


A function \(f:\overline{\mathbb D}\to\mathbb C\) belongs to the disc algebra
\(A(\mathbb D)\) if it is holomorphic on \(\mathbb D\) and continuous on
\(\overline{\mathbb D}\). Thus, $A(\mathbb D)=H^\infty(\mathbb D)\cap C(\overline{\mathbb D}).$ Endowed with the uniform norm, \(A(\mathbb D)\) is a Banach space; in fact, it
is a Banach algebra. Clearly, convergence in \(A(\mathbb D)\) implies uniform
convergence on \(\overline{\mathbb D}\), and hence uniform convergence on compact
subsets of \(\mathbb D\). Polynomials are dense in \(A(\mathbb D)\).

Let $\mathbb T=\{z\in\mathbb C:|z|=1\}$ be the unit circle. The space
\(L^1(\mathbb T)\) is endowed with the norm
\[
\|f\|_{L^1(\mathbb{T})}
=
\frac{1}{2\pi}\int_0^{2\pi}|f(e^{i\theta})|\,d\theta,
\]
which induces its usual topology. For \(f\in L^1(\mathbb T)\), we write
\[
\widehat f(n)
=
\frac{1}{2\pi}\int_0^{2\pi} f(e^{i\theta})e^{-in\theta}\,d\theta,
\qquad n\in\mathbb Z,
\]
for the Fourier coefficients of \(f\). A trigonometric polynomial is a finite
linear combination of the functions \(e_n(\theta)=e^{in\theta}\), \(n\in\mathbb Z\);
that is, it is a function of the form
\[
P(x)
=
\sum_{|n|\leq N} c_n e^{inx},
\qquad c_n\in\mathbb C.
\]
It is well known that the trigonometric polynomials, denoted by
\(\mathcal P_{\mathrm{trig}}\), are dense in \(L^1(\mathbb T)\).

We recall the following well-known theorem.
\begin{theorem}[\cite{katznelson2004introduction}, Theorem~4.1]
\label{Duren thm 4.5}
Let \((a_n)_{n\in\mathbb Z}\) be an even sequence of nonnegative numbers tending
to zero at infinity. Assume that \((a_n)\) is convex, in the sense that
\[
a_{n-1}+a_{n+1}-2a_n \geq  0,
\qquad n> 0.
\]
Then there exists a nonnegative function \(f\in L^1(\mathbb T)\) such that $\widehat f(n)=a_n$.

\end{theorem}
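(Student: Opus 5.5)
The plan is the classical Fejér-kernel construction: write \(f\) as a nonnegative combination of Fejér kernels, with weights given by the second differences of \((a_n)\). Let
\[
K_m(t)=\sum_{|j|\leq m}\Bigl(1-\frac{|j|}{m+1}\Bigr)e^{ijt}
\]
be the Fejér kernel. I will use three facts: \(K_m\geq 0\), \(\|K_m\|_{L^1(\mathbb T)}=\widehat{K_m}(0)=1\), and \(\widehat{K_m}(j)=\bigl(1-\tfrac{|j|}{m+1}\bigr)_+\). The candidate is
\[
f=\sum_{n=1}^{\infty} n\,\Delta^2 a_n\,K_{n-1},\qquad \Delta^2 a_n:=a_{n-1}+a_{n+1}-2a_n\geq 0 .
\]

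The first step is elementary facts about the sequence. Set \(\Delta a_n:=a_n-a_{n+1}\). Convexity says \(\Delta a_n\) is nonincreasing in \(n\geq 0\). Since \(a_n\to 0\), we get \(\Delta a_n\to 0\), so \(\Delta a_n\geq 0\) and \((a_n)\) is nonincreasing on \(n\geq 0\). Moreover \(\sum_{k\geq n}\Delta a_k=a_n\), and \(\Delta a_k\) is monotone, so
\[
\tfrac{n}{2}\,\Delta a_{n}\leq \sum_{n/2\leq k\leq n}\Delta a_k\leq a_{\lceil n/2\rceil}\to 0,
\]
which gives \(n\Delta a_n\to 0\).

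The second step is convergence. Summing by parts,
\[
\sum_{n=1}^N n\,\Delta^2 a_n=\sum_{n=1}^N n(\Delta a_{n-1}-\Delta a_n)=\sum_{n=0}^{N-1}\Delta a_n - N\Delta a_N = a_0-a_N-N\Delta a_N\to a_0 .
\]
Every term \(n\Delta^2 a_n K_{n-1}\) is nonnegative with \(L^1\)-norm \(n\Delta^2a_n\). Hence the series converges absolutely in \(L^1(\mathbb T)\), and its sum \(f\) is nonnegative almost everywhere.

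The third step is the coefficient identity. Fourier coefficients are continuous under \(L^1\) convergence, so for each \(j\in\mathbb Z\)
\[
\widehat f(j)=\sum_{n>|j|} n\,\Delta^2a_n\Bigl(1-\frac{|j|}{n}\Bigr)=\sum_{n>|j|}(n-|j|)(\Delta a_{n-1}-\Delta a_n).
\]
A second summation by parts over \(|j|<n\leq N\) gives
\[
\sum_{|j|\le n-1\le N-1}\Delta a_{n-1}-(N-|j|)\Delta a_N=a_{|j|}-a_N-(N-|j|)\Delta a_N .
\]
This tends to \(a_{|j|}=a_j\), using \(a_N\to0\), \(N\Delta a_N\to0\) and the evenness of \((a_n)\).

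The only real obstacle is the boundary terms in the summations by parts, and these are controlled by \(n\Delta a_n\to 0\). The rest is bookkeeping.
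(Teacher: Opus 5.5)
Your proof is correct, and it is essentially the argument the paper relies on. The paper does not reprove this statement but cites Katznelson's Theorem~4.1, whose proof is exactly this Fej\'er-kernel construction $f=\sum_{n\geq 1} n(a_{n-1}+a_{n+1}-2a_n)K_{n-1}$, using $n(a_n-a_{n+1})\to 0$ to kill the boundary terms in both summations by parts, just as you do.
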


\section{Proofs}

We proceed showing that $\mathcal{Z}_2$, and thus $\mathcal{Z}$, is non-empty.
\begin{lemma}\label{Duren lacunary example}
Let $0<a\leq \frac12$ and define
\[
a_n=\frac1{(\log(|n|+2))^a},
\qquad n \in \mathbb Z.
\]
Then there exist $f\in L^1(\mathbb T)$
 such that $\widehat f(n)=a_n$, but
\[
\sum_{k=1}^{\infty}|\widehat f(2^k)|^2=+\infty.
\]
\end{lemma}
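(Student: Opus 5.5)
The plan is to apply Theorem~\ref{Duren thm 4.5} to the sequence \(a_n=(\log(|n|+2))^{-a}\), which produces \(f\), and then to compute the lacunary sum directly. The sequence is even and nonnegative by construction, and it tends to zero because \(a>0\). What remains to check is convexity for \(n>0\), and then the divergence of the dyadic sum.

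For convexity I would use the function \(\varphi(x)=(\log(x+2))^{-a}\) on \([0,\infty)\), so that \(a_n=\varphi(n)\) for \(n\ge 0\). Differentiating gives
\[
\varphi'(x)=-a(x+2)^{-1}(\log(x+2))^{-a-1},
\]
\[
\varphi''(x)=a(x+2)^{-2}(\log(x+2))^{-a-2}\bigl(\log(x+2)+a+1\bigr),
\]
and \(\varphi''>0\) for \(x\ge0\), since \(\log(x+2)>0\). So \(\varphi\) is convex on \([0,\infty)\). For every \(n\ge1\) the three points \(n-1,n,n+1\) lie in \([0,\infty)\), and convexity of \(\varphi\) gives \(a_{n-1}+a_{n+1}-2a_n\ge0\). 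Note that \(n>0\) never uses \(a_{-1}\), and in any case \(a_{-1}=a_1\) by evenness, so the condition in Theorem~\ref{Duren thm 4.5} holds as stated. That theorem then yields a nonnegative \(f\in L^1(\mathbb T)\) with \(\widehat f(n)=a_n\) for all \(n\in\mathbb Z\).

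For the divergence, take \(k\ge1\). Then \(\log(2^k+2)\le\log(2^{k+1})=(k+1)\log2\), so
\[
|\widehat f(2^k)|^2=(\log(2^k+2))^{-2a}\ge\bigl((k+1)\log 2\bigr)^{-2a}.
\]
Since \(0<2a\le1\), the series \(\sum_k (k+1)^{-2a}\) diverges by comparison with the harmonic series. This is exactly where the restriction \(a\le\frac12\) is used. Hence \(\sum_{k}|\widehat f(2^k)|^2=+\infty\).

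I do not expect a serious obstacle. The only point needing care is convexity: one must check that it holds at \(n=1\), which involves \(a_0\), and this is covered because \(\varphi\) is convex on all of \([0,\infty)\).
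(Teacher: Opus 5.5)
Your proof is correct and follows the paper's argument: you prove convexity of $\varphi(x)=(\log(x+2))^{-a}$ from $\varphi''>0$, apply the Katznelson theorem to obtain $f$, and compare $\sum_k(\log(2^k+2))^{-2a}$ with $\sum_k k^{-2a}$, which diverges since $2a\le 1$. The only difference is cosmetic: you use the explicit bound $\log(2^k+2)\le (k+1)\log 2$, whereas the paper invokes the asymptotic relation $\log(2^k+2)\sim k\log 2$.
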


\begin{proof}
We first check that $(a_n)$ is convex, and tends to zero. Consider
\[
\phi(x)=\frac1{(\log(x+2))^a},
\qquad x\geq0.
\]
Then,
\[
\phi''(x)
=
\frac{a}{(x+2)^2}
\left(
\frac1{(\log(x+2))^{a+1}}
+
\frac{a+1}{(\log(x+2))^{a+2}}
\right)>0.
\]
Thus $\phi$ is convex, and therefore the sequence $a_n=\phi(n)$ is convex. Clearly, $a_n\to0.$

By Theorem~\ref{Duren thm 4.5} there exist a nonnegative $f\in L^1(\mathbb T)$ such that $\widehat f(n)=a_n$ and  for this function, we have $\widehat f(n)=\widehat f(-n)$, for
 $n\geq1.$
Therefore
\[
\sum_{k=1}^{\infty}|\widehat f(2^k)|^2
=
\sum_{k=1}^{\infty}a_{2^k}^2
=
\sum_{k=1}^{\infty}
\frac1{(\log(2^k+2))^{2a}}.
\]
Since
\[
\log(2^k+2)\sim k\log2
\qquad (k\to\infty),
\]
we get that 
\[
\sum_{k=1}^{\infty}
\frac1{(\log(2^k+2))^{2a}},
  \quad 
\sum_{k=1}^{\infty}\frac1{k^{2a}}.
\]
have the same behavior.
Because $0<a\leq\frac12,$
we have
\[
\sum_{k=1}^{\infty}|\widehat f(2^k)|^2=+\infty.
\]
\end{proof}

\begin{proof}[Proof of Proposition~\ref{Z_2 generic}]
For \(N\in\mathbb N\), define
\[
S_N(f)
=
\sum_{k=1}^{N}|\widehat f(2^k)|^2,
\qquad f\in L^1(\mathbb T).
\]
In order to show that \(\mathcal Z_2\) is a \(G_\delta\) set, it suffices to
show that \(L^1(\mathbb T)\setminus \mathcal Z_2\) is a countable union of
closed subsets of \(L^1(\mathbb T)\). For \(M,N\in\mathbb N\), consider the set
\[
\Omega_{M,N}
=
\left\{
f\in L^1(\mathbb T): S_N(f)\leq M
\right\}.
\]
Then
\[
L^1(\mathbb T)\setminus \mathcal Z_2
=
\bigcup_{M}\bigcap_{N}\Omega_{M,N}.
\]
Moreover, for each \(M,N\in\mathbb N\), the set \(\Omega_{M,N}\) is closed.
Indeed, let \(f_j\to f\) in \(L^1(\mathbb T)\), with \(f_j\in\Omega_{M,N}\) for
all \(j\). Since convergence in \(L^1(\mathbb T)\) implies convergence of the
Fourier coefficients, we have $\widehat{f_j}(n)\to \widehat f(n),$  $n\in\mathbb Z$. Hence, for fixed \(N\),
\[
S_N(f)
=
\lim_{j\to\infty} S_N(f_j)
\leq M.
\]
Thus \(f\in\Omega_{M,N}\), and so \(\Omega_{M,N}\) is closed and 
\[
\bigcap_{N}\Omega_{M,N}
\]
 is closed for every \(M\in\mathbb N\). Therefore
\(L^1(\mathbb T)\setminus \mathcal Z_2\) is an \(F_\sigma\) subset of $L^1(\mathbb{T})$.

It remains to prove that $\mathcal Z_2$ is dense. By Lemma~\ref{Duren lacunary example}, there exists
$f_0\in L^1(\mathbb T)$ such that
\[
\sum_{k=1}^{\infty}|\widehat f_0(2^k)|^2=+\infty.
\]
We claim that
\[
f_0+\mathcal P_{\mathrm{trig}}\subset\mathcal Z_2.
\]
Indeed, let $P\in\mathcal P_{\mathrm{trig}}$. Then there exists $M\in\mathbb N$ such that $\widehat P(n)=0$ for $(|n|>M).$ Hence, for all sufficiently large $k$,
\[
\widehat{f_0+P}(2^k)
=
\widehat f_0(2^k)+\widehat P(2^k)
=
\widehat f_0(2^k).
\]
Therefore
\[
\sum_{k=1}^{\infty}
|\widehat{f_0+P}(2^k)|^2
=
+\infty.
\]

Since $\mathcal P_{\mathrm{trig}}$ is dense in $L^1(\mathbb T)$, the set $f_0+\mathcal P_{\mathrm{trig}}$ is dense in $L^1(\mathbb T)$. Hence $\mathcal Z_2$ is dense in $L^1(\mathbb T)$.

\end{proof}

\begin{proof}[Proof of Theorem~\ref{alg spac criter}]
We prove that the complement of \(\mathcal A_p(X)\) is an \(F_\sigma\) set.
First, suppose that \(0<p<\infty\). We have
\[
X\setminus \mathcal A_p(X)
=
\left\{
f\in X:\beta(f)\in \ell^p
\right\}.
\]
For \(M,N\in\mathbb N\), set
\[
E_{M,N}
:=
\left\{
f\in X:
\sum_{n=0}^{N}|\beta_n(f)|^p\leq M
\right\}.
\]
Each \(E_{M,N}\) is closed. Indeed, let \(f_j\to f\) in \(X\), with
\(f_j\in E_{M,N}\) for every \(j\). Since convergence in \(X\) implies uniform
convergence on compact subsets of \(\mathbb D\), it also implies coefficientwise
convergence, that is, $\beta_n(f_j)\to \beta_n(f),$  $j\to\infty,$ for every \(n\in\mathbb N_0\). Therefore,
\[
\sum_{n=0}^{N}|\beta_n(f)|^p
=
\lim_{j\to\infty}\sum_{n=0}^{N}|\beta_n(f_j)|^p
\leq M.
\]
Hence \(f\in E_{M,N}\), and so \(E_{M,N}\) is closed.

Consequently,
\[
\bigcap_{N=1}^{\infty}E_{M,N}
\]
is closed for every \(M\in\mathbb N\). Since
\[
X\setminus \mathcal A_p(X)
=
\bigcup_{M=1}^{\infty}\bigcap_{N=1}^{\infty}E_{M,N},
\]
we conclude that \(X\setminus \mathcal A_p(X)\) is an \(F_\sigma\) set, and
therefore \(\mathcal A_p(X)\) is a \(G_\delta\) set.

For \(p=\infty\), we argue similarly.

It remains to prove density. Assume that $\mathcal A_p(X)\neq\varnothing,$
and choose $f\in\mathcal A_p(X).$
Let \(\mathcal P\) denote the space of polynomials. Since polynomials are dense in
\(X\), it is enough to show that
\[
f+\mathcal P\subset\mathcal A_p(X).
\]
Let \(P\in\mathcal P\). Since \(P\) has only finitely many non-zero Taylor
coefficients, the sequences $\beta(f+P)$ and  $\beta(f)$ differ only in finitely many terms. Therefore $\beta(f+P)\notin\ell^p.$

Hence
\[
f+\mathcal P\subset\mathcal A_p(X).
\]
Since \(f+\mathcal P\) is dense in \(X\), as a translation of a dense set, it follows that \(\mathcal A_p(X)\) is
dense in \(X\).

We now proceed to show algebraic genericity/spaceability.

Let
\[
f(z)=\sum_{n=0}^{\infty}a_nz^n\in X
\]
be such that \(\beta(f)\notin\ell^p\). Consider
\[
f_e(z)=\frac{f(z)+f(-z)}2=\sum_{m=0}^{\infty}a_{2m}z^{2m},
\]
and
\[
f_o(z)=\frac{f(z)-f(-z)}2=\sum_{m=0}^{\infty}a_{2m+1}z^{2m+1}.
\]
By assumption, \(f_e,f_o\in X\). Since \((a_n)\notin\ell^p\), at least one of
\((a_{2m})\) and \((a_{2m+1})\) is not in \(\ell^p\), where for \(p=\infty\) this means unbounded.

If \((a_{2m+1})\notin\ell^p\), set $h=f_o.$
If \((a_{2m})\notin\ell^p\), set $h(z)=zf_e(z).$
In both cases,
\[
h(z)=\sum_{m=0}^{\infty}b_mz^{2m+1}\in X
\]
and $(b_m)\notin\ell^p.$

For \(j\in\mathbb N_0\), define $g_j(z)=h(z^{2^j}).$ Then \(g_j\in X\), again from the assumptions, and
\[
g_j(z)=\sum_{m=0}^{\infty}b_mz^{2^j(2m+1)}.
\]
Thus the supports of the Taylor coefficients of \(g_j\) are contained in
\[
S_j=\{2^j(2m+1):m=0,1,2,\ldots\}.
\]
The sets \(S_j\) are pairwise disjoint.  Indeed $2^{j_1}(2m+1)\neq 2^{j_2}(2k+1)$ for every $j_1\neq j_2$.

We first prove algebraic genericity. Let \(d\) be a translation-invariant metric inducing the topology of \(X\). Let \((P_j)_{j\geq0}\) be an enumeration of all
polynomials with coefficients in \(\mathbb Q+i\mathbb Q\). Then every tail \(\{P_j:j\geq j_0\}\) is dense
in \(X\). Indeed, $X$ has no isolated points since it is a non-trivial topological vector space.

 Since scalar
multiplication is continuous, choose \(c_j\neq0\) such that $d(c_jg_j,0)<\frac1{j+1}.$
Set $F_j=P_j+c_jg_j.$ Then \(\{F_j:j\geq0\}\) is dense in \(X\). Indeed, if \(u\in X\) and \(\varepsilon>0\), choose
\(j_0\) such that \(d(c_jg_j,0)<\varepsilon/2\) for \(j\geq j_0\). Since the tail
\(\{P_j:j\geq j_0\}\) is dense, choose \(j\geq j_0\) with $d(P_j,u)<\frac{\varepsilon}{2}.$
Then
\[
d(F_j,u)\leq d(P_j,u)+d(c_jg_j,0)<\varepsilon.
\]

Let $F=\operatorname{span}\{F_j:j\geq0\}.$ Then \(F\) is dense in \(X\). We show that
\[
F\setminus\{0\}\subset\mathcal A_p(X).
\]

Let \(0\neq L\in F\). Then
\[
L=\sum_{j=0}^{N}\lambda_jF_j
\]
for some scalars \(\lambda_0,\ldots,\lambda_N\), not all zero. Hence
\[
L=P+\sum_{j=0}^{N}\lambda_jc_jg_j,
\]
where
\[
P=\sum_{j=0}^{N}\lambda_jP_j
\]
is a polynomial. Choose \(j_0\) such that $\lambda_{j_0}c_{j_0}\neq0.$ Since \(P\) has only finitely many non-zero Taylor coefficients and the supports
\(S_j\) are pairwise disjoint, for all sufficiently large \(m\),
\[
\beta_{2^{j_0}(2m+1)}(L)=\lambda_{j_0}c_{j_0}b_m.
\]
If \(0<p<\infty\), then
\[
\sum_m |\beta_{2^{j_0}(2m+1)}(L)|^p
=
|\lambda_{j_0}c_{j_0}|^p\sum_m |b_m|^p
=
+\infty.
\]
If \(p=\infty\), the sequence \((b_m)\) is unbounded, hence the sequence
\((\beta_{2^{j_0}(2m+1)}(L))_m\) is unbounded. Therefore
\[
\beta(L)\notin\ell^p.
\]
Thus \(L\in\mathcal A_p(X)\), and so \(\mathcal A_p(X)\) is algebraically generic.

We now prove spaceability. Let
\[
M=\overline{\operatorname{span}\{g_j:j\geq0\}}^{\,X}.
\]
Then \(M\) is a closed infinite-dimensional vector subspace of \(X\), since the \(g_j\)'s
are linearly independent.

Let \(0\neq u\in M\). Choose \(u_k\in\operatorname{span}\{g_j:j\geq0\}\) such that $u_k\to u$ in \(X\). Write
\[
u_k= \sum_{j=0}^{N_k}\lambda_{k,j}g_j.
\]
Since \(X\) is continuously embedded in \(H(\mathbb D)\), convergence in \(X\) implies
coefficientwise convergence. Since \(u\neq0\), there exists \(\ell_0\geq1\) such that $\beta_{\ell_0}(u)\neq0.$  We write uniquely $\ell_0=2^{j_0}(2m_0+1).$
 
Then
\[
\beta_{2^{j_0}(2m_0+1)}(u_k)=\lambda_{k,j_0}b_{m_0}.
\]
Passing to the limit gives \(b_{m_0}\neq0\) and $\lambda_{k,j_0}\to\lambda$ for some \(\lambda\neq0\). Therefore, for every \(m\geq0\),  
\[
\beta_{2^{j_0}(2m+1)}(u)
=
\lambda b_m.
\]
If \(0<p<\infty\), then
\[
\sum_m |\beta_{2^{j_0}(2m+1)}(u)|^p
=
|\lambda|^p\sum_m |b_m|^p
=
+\infty.
\]
If \(p=\infty\), then \((\beta_{2^{j_0}(2m+1)}(u))_m=(\lambda b_m)_m\) is unbounded.
Thus $\beta(u)\notin\ell^p.$
Hence
\[
M\setminus\{0\}\subset\mathcal A_p(X).
\]
Therefore \(\mathcal A_p(X)\) is spaceable.
\end{proof}

\begin{proof}[Proof of Corollary~\ref{cor:A-infty-A1}]
We apply the abstract coefficient criterion.

First let $X=\bigcap_{0<p<1}H^p.$
Then \(X\) is a metrizable topological vector space of holomorphic functions on
\(\mathbb D\), and convergence in \(X\) implies uniform convergence on compact
subsets of \(\mathbb D\). Moreover, polynomials are dense in \(X\) (see section~\ref{sec:2}). Also, if
\(f\in X\), then $f(-z), zf(z),f(z^N)$, 
belong to \(X\). Indeed,  a direct computation shows
\[
\sup_{0<r<1}\int_{0}^{2\pi}|re^{i\theta}f(re^{i\theta})|^p \, d\theta \leq \sup_{0<r<1}\int_{0}^{2\pi}|f(re^{i\theta})|^p<\infty,
\]
for every $0<p<1$.
The first and last claim follows directly, from  Littlewood's Subordination Theorem~\ref{subord}.

Now let $X=A(\mathbb D).$
Then \(A(\mathbb D)\) is a Banach space of holomorphic functions on \(\mathbb D\),
and convergence in \(A(\mathbb D)\) is uniform convergence on
\(\overline{\mathbb D}\), hence also on compact subsets of \(\mathbb D\). Polynomials
are dense in \(A(\mathbb D)\) (again see section~\ref{sec:2}). Moreover, if \(f\in A(\mathbb D)\), then  $f(-z), zf(z),f(z^N)$, 
belong to \(A(\mathbb D)\).

Finally, in both cases, the corresponding exceptional sets are nonempty; see
\cite{pandis2024some,duren1970theory}. Therefore, the abstract criterion applies
and yields the desired conclusion.
\end{proof}

\begin{proof}[Proof of Theorem~\ref{C_n-C_n-1}]
Fix $n\in\mathbb N$. Choose $0<\alpha<1$
and consider
\[
\varphi(z):=\frac{1}{(1-z)^\alpha}=\sum_{k=0}^{\infty}a_kz^k,\qquad z\in\mathbb D.
\]
Since $\alpha<1$, we have $\varphi\in H^p$ for every $0<p<1$ and thus $\varphi\in \bigcap_{0<p<1}H^p.$

Then
\[
a_k=\binom{k+\alpha-1}{k}\approx \frac{k^{\alpha-1}}{\Gamma(\alpha)}\, \quad (k\to \infty).
\]
(see \cite{NestoridisGenericH1,zygmund2002trigonometric}).

In particular, the sequence $(a_k)$ is bounded, while the sequence $(ka_k)$ is
unbounded.

For every $j\in\mathbb N_0$, define
\[
\varphi_j(z):=\frac{\varphi(z^{2^j})-\varphi(-z^{2^j})}{2}.
\]
Then $\varphi_j\in \bigcap_{0<p<1}H^p$, as described in Corollary~\ref{cor:A-infty-A1}, for every $j\in\mathbb N_0$. Moreover,
\[
\varphi_j(z)
=
\sum_{m=0}^{\infty}a_{2m+1}z^{2^j(2m+1)}.
\]
Thus the Taylor coefficient support of $\varphi_j$ are the pairwise disjoint sets
\[
S_j:=\{2^j(2m+1):m=0,1,2,\ldots\}.
\]

Let $F$ denote the primitive operator
\[
Fh(z):=\int_0^zh(\zeta)\,d\zeta.
\]
For every $j\in\mathbb N_0$, set
\[
u_j:=F^{\,n-1}\varphi_j.
\]
Then $u_j\in \bigcap_{0<p<1}H^p$, from Theorem~\ref{Nest-Thirios primitive}.
Moreover,
\[
u_j^{(n-1)}=\varphi_j,
\qquad
u_j^{(n)}=\varphi_j'.
\]

We first observe that $u_j\in\mathcal D_n$
for every $j\in\mathbb N_0$.

We now build a dense vector space contained, except for zero, in $\mathcal D_n$. Let $\{P_j\}_{j\in\mathbb N_0}$ be an enumeration of the polynomials with
coefficients in $\mathbb Q+i\mathbb Q$. Since polynomials are dense in $\bigcap_{0<p<1}H^p$, every tail $\{P_j:j\geq j_0\}$ is dense in $\bigcap_{0<p<1}H^p$. Since scalar multiplication is continuous in $\bigcap_{0<p<1}H^p$, for every $j\in\mathbb N_0$ we may
choose $c_j\in\mathbb C\setminus\{0\}$ such that $d(c_ju_j,0)<\frac{1}{j+1}.$
 
Set $F_j:=P_j+c_ju_j.$ The set $\{F_j:j\in\mathbb N_0\}$
is dense in $\bigcap_{0<p<1}H^p$. Let
\[
F:=\operatorname{span}\{F_j:j\in\mathbb N_0\}.
\]
Then $F$ is a dense vector subspace of $\bigcap_{0<p<1}H^p$. We shall prove that
\[
F\setminus\{0\}\subset \mathcal D_n.
\]

Let $0\neq L\in F$. Then there exist $N\in\mathbb N$ and scalars
$\lambda_0,\ldots,\lambda_N\in\mathbb C$, not all zero, such that
\[
L=\sum_{j=0}^{N}\lambda_jF_j.
\]
Thus
\[
L
=
\sum_{j=0}^{N}\lambda_jP_j
+
\sum_{j=0}^{N}\lambda_jc_ju_j.
\]
Choose $j_0\in\{0,\ldots,N\}$ such that $\lambda_{j_0}c_{j_0}\neq0.$

First, we show that $\beta(L^{(n-1)})\in\ell^\infty.$
Indeed,
\[
L^{(n-1)}
=
P+\sum_{j=0}^{N}\lambda_jc_j\varphi_j,
\]
where
\[
P:=\left(\sum_{j=0}^{N}\lambda_jP_j\right)^{(n-1)}
\]
is a polynomial. Each $\varphi_j$ has bounded Taylor coefficients, and only finitely
many $\varphi_j$ occur in the sum. Hence $\beta(L^{(n-1)})\in\ell^\infty.$
Therefore $L\notin\mathcal C_{n-1}.$

It remains to prove that $\beta(L^{(n)})\notin\ell^\infty.$
We have
\[
L^{(n)}
=
P'
+
\sum_{j=0}^{N}\lambda_jc_j\varphi_j'.
\]
The polynomial $P'$ has only finitely many non-zero Taylor coefficients.

Moreover, the supports of the functions $\varphi_j'$ are
\[
S_j-1:=
\{2^j(2m+1)-1:m=0,1,2,\ldots\}.
\]
which are still pairwise disjoint.  Therefore, on the support $S_{j_0}-1$, no cancellation with the other
$\varphi_j'$ can occur. Hence, for all sufficiently large $m$,
\[
\beta_{2^{j_0}(2m+1)-1}(L^{(n)})
=
\lambda_{j_0}c_{j_0}
2^{j_0}(2m+1)a_{2m+1}.
\]
Since $(2m+1)a_{2m+1}\to+\infty$ and $\lambda_{j_0}c_{j_0}\neq0,$
the sequence $\beta(L^{(n)})$ is unbounded. Thus $L\in\mathcal C_n.$

Combining the two conclusions, we get
\[
L\in\mathcal C_n\setminus\mathcal C_{n-1}
=
\mathcal D_n.
\]
Therefore the proof is complete.
\end{proof}

\begin{proof}[Proof of Theorem~\ref{alg spac nestor}]

The set \(\Lambda\) is dense \(G_\delta\) in \(H^1\). In particular, it is non-empty (\cite{NestoridisGenericH1}).  Choose $ f\in\Lambda$  and write \[ F(z):=\int_0^z f(\zeta)\,d\zeta = \sum_{n=0}^{\infty}a_nz^n. \] 

Since \(F(0)=0\), we have \(a_0=0\). By the choice of \(f\), $ (a_n)_{n\ge0}\notin\ell^p $ for every \(0<p<1\). We claim that either 
\[ (a_{2m})_{m\ge1}\notin\ell^p \quad\text{for every }0<p<1, \] 
or 
\[ (a_{2m+1})_{m\ge0}\notin\ell^p \quad\text{for every }0<p<1. \] 
Indeed, suppose not. Then there exist \(r,s\in(0,1)\) such that $ (a_{2m})_{m\ge1}\in\ell^r $ and $ (a_{2m+1})_{m\ge0}\in\ell^s.$  Let  $t:=\max\{r,s\}<1$.  Since \(\ell^r\subset\ell^t\) and \(\ell^s\subset\ell^t\), both subsequences belong to \(\ell^t\). Hence $ (a_n)_{n\ge0}\in\ell^t,$ which contradicts \(f\in\Lambda\). This proves the claim.

Now consider the odd and even parts of \(F\):
\[ F_o(z):=\frac{F(z)-F(-z)}{2} = \sum_{m=0}^{\infty}a_{2m+1}z^{2m+1}, 
\] 
and
\[ F_e(z):=\frac{F(z)+F(-z)}{2} = \sum_{m=1}^{\infty}a_{2m}z^{2m}. \] 

If $ (a_{2m+1})_{m\ge0}\notin\ell^p $ for every $0<p<1$,  set $ H:=F_o.$ 

Then \[ H(z)=\sum_{m=0}^{\infty}b_mz^{2m+1} \] with  $(b_m)_{m\ge0}\notin\ell^p$ for every $0<p<1.$ 

Moreover, \[ H'(z)=\frac{f(z)+f(-z)}{2}\in H^1. \] If instead $ (a_{2m})_{m\ge1}\notin\ell^p $ for every $ 0<p<1,$  set  $H(z):=zF_e(z).$
  
Again we may write, 
\[
H(z)=\sum_{m=0}^{\infty}b_mz^{2m+1} 
\] 
with  $(b_m)_{m\ge0}\notin\ell^p$ for every $0<p<1.$  For convenience we use the same notation for $(b_n)$.

Furthermore,
\[ H'(z)=F_e(z)+zF_e'(z).
\] 
Since \(F\) is the primitive of an \(H^1\)-function, we get \(F\in H^{\infty}\subset H^1\) from Lemma~\ref{f' H^1 f H^{infty}}, and hence $ F_e\in H^1. $

Also 
\[ 
F_e'(z)=\frac{f(z)-f(-z)}{2}\in H^1.
\] 
Thus $ H'\in H^1.$ In either case, we have constructed a holomorphic function 
\[ 
H(z)=\sum_{m=0}^{\infty}b_mz^{2m+1} 
\] 
such that $ h:=H'\in H^1 $
and $ (b_m)_{m\ge0}\notin\ell^p$ for every $0<p<1.$ For every \(j\in\mathbb N_0\), define
\[ 
H_j(z):=H(z^{2^j}) \quad \text{and}\quad g_j(z):=H_j'(z). 
\] 
Then 
\[ 
g_j(z)=2^jz^{2^j-1}h(z^{2^j}). 
\] 
Since \(h\in H^1\), it follows that \(g_j\in H^1\). Indeed, for fixed \(j\), using Theorem~\ref{subord} 
\[ 
\|g_j\|_{H^1} \leq 2^j\|h\|_{H^1} <+\infty.
\] 
Therefore  the primitive coefficient support of \(g_j\) is 
\[ 
S_j:=\{2^j(2m+1):m=0,1,2,\ldots\}. 
\] 
The sets \(S_j\), \(j\in\mathbb N_0\), are pairwise disjoint. We now prove algebraic genericity. Let \((P_j)_{j\ge0}\) be an enumeration of the polynomials with coefficients in \(\mathbb Q+i\mathbb Q\). Since polynomials are dense in \(H^1\), every tail $ \{P_j:j\ge j_0\} $ is dense in \(H^1\). 

Choose \(c_j\in\mathbb C\setminus\{0\}\) such that   $\|c_jg_j\|_{H^1}<\frac1{j+1}$ and  set   $G_j:=P_j+c_jg_j$.    Then the set \(\{G_j:j\ge0\}\) is dense in \(H^1\). Indeed, if \(u\in H^1\) and \(\varepsilon>0\), choose \(j_0\) such that   $\|c_jg_j\|_{H^1}<\frac{\varepsilon}{2}$   for every \(j\ge j_0\). 

The tail \(\{P_j:j\ge j_0\}\) is dense in \(H^1\), since $H^1$ has no isolated points, so there exists \(j\ge j_0\) such that   $\|P_j-u\|_{H^1}<\frac{\varepsilon}{2}$.   Hence \[ \|G_j-u\|_{H^1} \leq \|P_j-u\|_{H^1}+\|c_jg_j\|_{H^1} <\varepsilon. \]

Let \[ F:=\operatorname{span}\{G_j:j\ge0\}. \] Then \(F\) is a dense vector subspace of \(H^1\). We show that \[ F\setminus\{0\}\subset \Lambda. \] Let \(0\neq L\in F\). Then \[ L=\sum_{j=0}^{N}\lambda_jG_j \] for some scalars \(\lambda_0,\ldots,\lambda_N \in \mathbb{C}\), not all zero. Hence \[ L=P+\sum_{j=0}^{N}\lambda_jc_jg_j, \] where \[ P:=\sum_{j=0}^{N}\lambda_jP_j \] is a polynomial. Choose \(j_0\) such that $ \lambda_{j_0}c_{j_0}\neq0. $ 

Since \(F(P)\) is a polynomial, it has only finitely many non-zero Taylor coefficients. Since the supports \(S_j\) are pairwise disjoint, no cancellation occurs on \(S_{j_0}\). Therefore, for all sufficiently large \(m\), \[ a_{2^{j_0}(2m+1)}(L) = \lambda_{j_0}c_{j_0}b_m. \]

Since  $(b_m)\notin\ell^p$  for every \(0<p<1\), it follows that   $a(L)\notin\ell^p$   for every \(0<p<1\). Hence $ L\in\Lambda.$  This proves algebraic genericity.

We now prove spaceability. Let \[ M:=\overline{\operatorname{span}\{g_j:j\ge0\}}^{\,H^1}. \] Then \(M\) is a closed vector subspace of \(H^1\). It is infinite-dimensional because the functions \(g_j\) are linearly independent. Let \(0\neq u\in M\). Choose a sequence \[ u_k\in\operatorname{span}\{g_j:j\ge0\} \] such that  $ u_k\to u $ in \(H^1\). Write \[ u_k=\sum_{j=0}^{N_k}\lambda_{k,j}g_j. \] Convergence in \(H^1\) implies uniform convergence on compact subsets of \(\mathbb D\), hence coefficientwise convergence, i.e $\beta_{n}(u_k)\to \beta_n(u)$ as $k\to \infty$ for every $n$. This also leads to $a_{n}(u_k)=\frac{\beta_{n-1}(u_k)}{n}\to \frac{\beta_{n-1}(u)}{n}=a_n(u)$ as $k\to \infty$ for every $n$.

Since \(u\neq0\), the primitive \(F(u)\) is not identically zero. Hence there exists $ \ell_0\ge1$  such that $ a_{\ell_0}(u)\neq0.$   Write uniquely  $\ell_0=2^{j_0}(2m_0+1)$.   Then \[ a_{2^{j_0}(2m_0+1)}(u_k) = \lambda_{k,j_0}b_{m_0}. \] Passing to the limit gives \(b_{m_0}\neq0\) and   $\lambda_{k,j_0}\to\lambda$   for some \(\lambda\neq0\). Therefore, for every \(m\ge0\), \[ a_{2^{j_0}(2m+1)}(u) = \lambda b_m. \] Since \((b_m)\notin\ell^p\) for every \(0<p<1\), we obtain $ a(u)\notin\ell^p$   for every \(0<p<1\). Thus  $ u\in\Lambda.$   

Therefore \[ M\setminus\{0\}\subset\Lambda. \] Since \(M\) is closed and infinite-dimensional, \(\Lambda\) is spaceable in \(H^1\). \end{proof}

\noindent

{\bf Acknowledgements.} The author would like to thank Vassili Nestoridis for  helpful communication.

\bibliographystyle{amsplain}
\bibliography{bibli}

\end{document}